\providecommand{\definitionname}{Definition}
\providecommand{\lemmaname}{Lemma}
\providecommand{\questionname}{Question}
\providecommand{\theoremname}{Theorem}
\numberwithin{equation}{section}
\newtheorem{thm}{\protect\theoremname}[section]
\theoremstyle{plain}
\newtheorem{lem}[thm]{\protect\lemmaname}
\newtheorem{prop}[thm]{Proposition}
\newtheorem{cor}[thm]{Corollary}
\theoremstyle{definition}
\newtheorem{defn}[thm]{\protect\definitionname}
\newtheorem{exm}[thm]{Example}
\newtheorem{rem}[thm]{Remark}
\newtheorem{fct}[thm]{Fact}
\newtheoremstyle{claimstyle}{}{}{}{}{}{}{ }{\textbf{Claim:\thmnote{ \textnormal{(#3)}}}}
\theoremstyle{claimstyle}
\author{Nachi Avraham-Re'em}
\address{Department of Mathematics, Chalmers University, G\"{o}teborg, Sweden}
\email{nachi.avraham@gmail.com \(\textstyle{or}\) nachman@chalmers.se}
\author{George Peterzil}
\address{Einstein Institute of Mathematics, The Hebrew University, Israel}
\email{george.peterzil@mail.huji.ac.il}
\thanks{The research was supported by ISF (grant No. 1180/22) and by the Knut and Alice Wallenberg Foundation (KAW 2021.0258).}
\date{\today}
\title[Uncountable Hyperfiniteness]{Uncountable Hyperfiniteness and\\ The Random Ratio Ergodic Theorem}
\subjclass[2020]{37A20, 28D15, 37A40, 03E15, 22D40, 22F10}
\keywords{hyperfinite equivalence relations, amenable equivalence relations, amenable groups, random ratio ergodic theorem}
\begin{document}

\maketitle

\begin{abstract}
We show that the orbit equivalence relation of a free action of a locally compact group is hyperfinite (\`{a} la Connes--Feldman--Weiss) precisely when it is \emph{hypercompact}. This implies an uncountable version of the Ornstein--Weiss Theorem and that every locally compact group admitting a hypercompact probability preserving free action is amenable. We also establish an uncountable version of Danilenko's Random Ratio Ergodic Theorem. From this we deduce the \emph{Hopf dichotomy} for many nonsingular Bernoulli actions.
\end{abstract}

\tableofcontents

\section{Introduction}

A well-studied property of countable equivalence relations is the property of being \emph{hyperfinite}, which means that the equivalence relation is the increasing union of countably many \emph{finite} equivalence relations, where the finiteness is referring to the cardinality of the classes. Of a particular interest are Borel actions of countable groups whose associated \emph{orbit equivalence relation} (henceforth \emph{OER}) is hyperfinite. One of the milestones in this theory is the Ornstein--Weiss Theorem \cite[Theorem 6]{ornstein1980}, by which OERs arising from nonsingular free actions of amenable groups are \emph{measure-hyperfinite}, i.e. hyperfinite up to a zero measure set. The celebrated Connes--Feldman--Weiss Theorem \cite{connes1981} generalized this to amenable equivalence relations (not necessarily OERs).

The usual notion of hyperfiniteness applies to countable equivalence relations only, thus the aforementioned theorems deal with countable groups. In dealing with general locally compact second countable (henceforth \emph{lcsc}) groups, Connes, Feldman \& Weiss used cross sections \cite[p. 447]{connes1981},\footnote{The terminology in the field has evolved in an inconsistent way. What is referred to by Connes, Feldman \& Weiss as \emph{transversal} has evolved to what is nowadays usually called \emph{cross section}, meaning a set which intersects every class countably many times, while \emph{transversal} nowadays refers to a set intersecting every class exactly once.} which will be introduced below in Section \ref{sct:cross}. Thus, they called an uncountable OER \emph{hyperfinite} if its restriction to one (hence every; see Proposition \ref{prop:kechris} below) cross section is hyperfinite. In order to avoid confusion, we will call this {\bf sectional-hyperfinite} (see Definition \ref{dfn:secthyperf} below). Thus, the Connes--Feldman--Weiss Theorem asserts that every OER of an lcsc amenable group is measure-sectional-hyperfinite \cite[Corollary 18]{connes1981}.

In this work we focus on OERs arising from Borel free actions of an lcsc group \(G\) on a standard Borel space \(X\). Such an OER will be denoted by
\[E_{G}^{X}.\]
While the classes of \(E_{G}^{X}\) are typically uncountable, using the freeness of the action there is a natural way to define hypercompactness by pushing forward the topology from the acting group to the orbits. A precise definition will be presented in Section \ref{sct:hypercomp}. With this natural concept defined, we have the following characterization which is purely in the Borel category:

\begin{thm}
\label{thm:CFW}
Let \(G\) be an lcsc group and \(X\) a free Borel \(G\)-space. Then \(E_{G}^{X}\) is sectional-hyperfinite if and only if it is hypercompact.
\end{thm}

When adding a measure, from the Connes--Feldman--Weiss Theorem we obtain an uncountable version of the Ornstein--Weiss Theorem:

\begin{cor}
\label{cor:OW}
Every nonsingular free action of an lcsc amenable group is measure-hypercompact.
\end{cor}

We continue to study the asymptotic invariance of hypercompact OERs, as presented in Section \ref{sct:asympinv}, and use it to get an uncountable version of a well-known fact in countable groups (see \cite[Proposition 4.3.3]{zimmer2013} and with the Ornstein--Weiss Theorem):

\begin{thm}
\label{thm:pmp}
An lcsc group \(G\) is amenable if and only if one (hence all) of its probability preserving free \(G\)-spaces is measure-hypercompact.
\end{thm}

Hyperfiniteness or hypercompactness of an action provides a natural way to take ergodic averages of a function and study their asymptotic behaviour. This was done by Danilenko \cite[Appendix A]{danilenko2019} in the countable case, where he established a \emph{Random Ratio Ergodic Theorem} for nonsingular actions of countable amenable groups using the Ornstein--Weiss Theorem. In the next we exploit Theorem \ref{thm:pmp} in order to establish an uncountable version of Danilenko's theorem. We will make use of a natural notion of \emph{Random F\o lner sequence} that will be presented in Section \ref{sct:tworret}.

\begin{thm}[Random Ratio Ergodic Theorem]
\label{thm:rret}
Let \(G\) be an lcsc amenable group and \(\left(X,\mu\right)\) a nonsingular probability \(G\)-space (not necessarily free). Then there exists a random F\o lner sequence \(\mathcal{S}_{1}\subseteq\mathcal{S}_{2}\subseteq\dotsm\) of \(G\) such that for every \(f\in L^{1}\left(X,\mu\right)\),
\[\lim_{n\to\infty}\frac{\int_{\mathcal{S}_{n}}\frac{d\mu\circ g}{d\mu}\left(x\right)f\left(g.x\right)d\lambda\left(g\right)}{\int_{\mathcal{S}_{n}}\frac{d\mu\circ g}{d\mu}\left(x\right)d\lambda\left(g\right)}=\mathbb{E}\left(f\mid\mathrm{Inv}_{G}\left(X\right)\right)\left(x\right)\]
for almost every realization \(\left(\mathcal{S}_{1},\mathcal{S}_{2},\dotsc\right)\), both \(\mu\)-a.e. and in \(L^{1}\left(X,\mu\right)\).
\end{thm}

As it was shown by Hochman \cite{hochman2013}, when fixing one realization of \(\mathcal{S}_{1}\subseteq\mathcal{S}_{2}\subseteq\dotsm\) it is not always true that the limit in Theorem \ref{thm:rret} holds for all functions in \(L^{1}\left(X,\mu\right)\) at once. Nevertheless, it was observed by Danilenko that one may restrict the attention to a countable family of functions in \(L^{1}\left(X,\mu\right)\), and obtain the following particularly useful corollary:

\begin{cor}
\label{cor:danilrret}
Let \(G\) be an lcsc amenable group and \(\left(X,\mu\right)\) a nonsingular probability \(G\)-space. Then for every countable family \(\mathcal{L}\subset L^{1}\left(X,\mu\right)\) there exists a F\o lner sequence \(S_{1}\subseteq S_{2}\subseteq\dotsm\) of \(G\) such that for every \(f\in\mathcal{L}\),
\[\lim_{n\to\infty}\frac{\int_{S_{n}}\frac{d\mu\circ g}{d\mu}\left(x\right)f\left(g.x\right)d\lambda\left(g\right)}{\int_{S_{n}}\frac{d\mu\circ g}{d\mu}\left(x\right)d\lambda\left(g\right)}=\mathbb{E}\left(f\mid\mathrm{Inv}_{G}\left(X\right)\right)\left(x\right)\]
both \(\mu\)-a.e. and in \(L^{1}\left(X,\mu\right)\).
\end{cor}

The Random Ratio Ergodic Theorem in countable groups proved itself useful in ergodic theory in recent years, particularly because it is not limited to probability preserving actions but applies also to nonsingular actions, where the pointwise ergodic theorem is generally unavailable. The classical \emph{Hopf method}, originally used by Hopf in the probability preserving category to prove the ergodicity of the geodesic flow, was developed in recent years by Kosloff \cite{kosloff2019} and then by Danilenko \cite{danilenko2019} in the nonsingular category. Originally, Kosloff suggested this method for \emph{Ratio Ergodic Theorem countable groups} (see \cite[\S3.3]{kosloff2019}), and Danilenko, observing that the Random Ratio Ergodic Theorem is sufficient, showed that this method applies to all countable amenable groups.

The works of Kosloff and Danilenko concentrate on establishing ergodicity in nonsingular Bernoulli and Markov shifts over countable groups. In Section \ref{sct:Hopf}, we will use Theorem \ref{thm:rret} to demonstrate that certain \emph{nonsingular Bernoulli shifts} of locally compact groups satisfy the \emph{Hopf dichotomy}: they are either totally dissipative or ergodic.

\section{Fundamentals}
\label{Section: Fundamentals}

Throughout this work, \(G\) stands for a locally compact second countable (lcsc) group, that is, a Polish group whose topology is locally compact. We will fix once and for all a left Haar measure \(\lambda\) on \(G\). A {\bf compact filtration} of \(G\) is a sequence \(K_{1}\subseteq K_{2}\subseteq\dotsm\) of compact subsets of \(G\) such that \(G=K_{1}\cup K_{2}\cup\dotsm\). Such a compact filtration is {\bf equicompact} if it has the property that every compact set \(C\subset G\) is contained in \(K_{n}\) for all sufficiently large \(n\in\mathbb{N}\). By a theorem of Struble \cite{struble1974}, every lcsc group \(G\) admits a {\bf compatible proper metric}, that is, a metric on \(G\) whose topology is the given topology of \(G\) and with respect to which closed ball are compact. In particular, an increasing sequence of balls in such a metric whose radii diverge to \(+\infty\) forms an equicompact filtration of \(G\).

\subsubsection{Borel \(G\)-Spaces}

A {\bf Borel \(G\)-space} is a standard Borel space \(X\) (whose \(\sigma\)-algebra is fixed but remains implicit) together with a Borel map
\[G\times X\to X,\quad\left(g,x\right)\mapsto g.x,\]
such that \(e.x=x\) for every \(x\in X\), where \(e\in G\) is the identity element, and \(gh.x=g.\left(h.x\right)\) for every \(g,h\in G\) and \(x\in X\). A Borel \(G\)-space is {\bf free} if \(g.x\neq x\) for every \(g\in G\backslash\left\{e\right\}\) and \(x\in X\).

\subsubsection{Cross Sections}
\label{sct:cross}

The notion of cross section for a Borel \(G\)-space is classical and is known for decades, but more recently it went through some useful improvements. See the survey \cite[\S3.2]{kechris2024}. In the following presentation we mostly follow Slutsky's treatment \cite[\S2]{slutsky2017}.

Let \(X\) be a Borel \(G\)-space. A {\bf cross section} (also called {\bf complete section} or {\bf countable section}) for \(X\) is a Borel set \(\mathfrak{C}\subseteq X\) whose intersection with every orbit is nonempty and countable. A cross section \(\mathfrak{C}\) is {\bf \(U\)-lacunary}, for some symmetric identity neighborhood \(U\subset G\), if the action map \(U\times\mathfrak{C}\to X\) is injective, i.e. \(U.w\cap U.z=\emptyset\) for all distinct \(w,z\in\mathfrak{C}\). By a theorem of Kechris \cite[Corollary 1.2]{kechris1992}, every Borel \(G\)-space admits a lacunary cross section.

A cross section \(\mathfrak{C}\) of \(X\) is {\bf \(K\)-cocompact}, for some compact identity neighborhood \(K\subset G\), if \(K.\mathfrak{C}=X\). By a theorem of C. Conley and L. Dufloux, strengthening the aforementioned theorem of Kechris, every Borel \(G\)-space admits a lacunary cocompact cross section. See \cite[Theorem 3.11]{kechris2024}. The following version is due to Slutsky \cite[Theorem 2.4]{slutsky2017}.

\begin{thm}
\label{thm:cross}
Let \(X\) be a Borel \(G\)-space. For every compact symmetric identity neighborhood \(U\subset G\), there exists a \(U\)-lacunary \(U^{2}\)-cocompact cross section for \(X\).
\end{thm}

\subsubsection{Voronoi Tessellations}
\label{sct:voronoi}

From a cross section of a free Borel \(G\)-space one may define in a standard way a {\bf Voronoi tessellation} of the space. We present this here following Slutsky \cite[\S4.1]{slutsky2017}.

Let \(X\) be a free Borel \(G\)-space and \(\mathfrak{C}\) a lacunary cross section for \(E_{G}^{X}\). Fix a compatible proper metric \(d\) on \(G\) and, as the action is free, let the map
\[d_{o}:E_{G}^{X}\to\mathbb{R}_{\geq 0},\quad d_{o}\left(x,g.x\right)=d\left(e,g\right).\]
For \(x\in X\) and \(r\geq 0\) denote the set
\[\mathfrak{C}_{r}\left(x\right):=\left\{ w\in\mathfrak{C}\cap G.x:d_{o}\left(x,w\right)\leq r\right\}.\]
The lacunarity of \(\mathfrak{C}\) shows that \(\mathfrak{C}_{r}\left(x\right)\) is a finite set and, since \(\mathfrak{C}\) is a cross section, for every \(x\in X\) there is some \(r\geq 0\) for which \(\mathfrak{C}_{r}\left(x\right)\) is nonempty. Consequently, the Borel function
\[r_{o}:X\to\mathbb{R}_{\geq0},\quad r_{o}\left(x\right)=\min\left\{ d_{o}\left(x,w\right):\left(x,w\right)\in\left(X\times\mathfrak{C}\right)\cap E_{G}^{X}\right\},\]
is well-defined. For \(x\in X\), consider the finite set
\[\mathfrak{C}\left(x\right):=\mathfrak{C}_{r_{o}\left(x\right)}\left(x\right)=\left\{w\in\mathfrak{C}\cap G.x:d_{o}\left(x,w\right)=r_{o}\left(x\right)\right\}.\]

The Voronoi tessellation \(\left\{T_{w}:w\in\mathfrak{C}\right\}\) will be designed to form a partition of \(X\), where we aim to allocate a point \(x\in X\) to a tile \(T_{w}\) if \(w\in\mathfrak{C}\left(x\right)\). While for a given \(x\) there can be multiple elements in \(\mathfrak{C}\left(x\right)\), there are at most finitely many such elements, so fix a Borel linear ordering \(\prec\) on \(X\) (say via a Borel isomorphism of \(X\) with \(\mathbb{R}\)) and define the allocation Borel map
\[\tau_{o}:X\to\mathfrak{C}\text{ by letting }\tau_{o}\left(x\right)\text{ be the }\prec\text{-least element of }\mathfrak{C}\left(x\right).\]
Accordingly, we define the Voronoi tessellation \(\left\{T_{w}:w\in\mathfrak{C}\right\}\) by
\[T_{w}:=\left\{x\in X:\tau_{o}\left(x\right)=w\right\},\quad w\in\mathfrak{C}.\]

\subsubsection{OERs, Smoothness, Idealism}

Suppose \(X\) is a Borel \(G\)-space. Then {\bf \emph{the} orbit equivalence relation} associated with \(X\) is
\[E_{G}^{X}:=\left\{\left(x,g.x\right):x\in X,g\in G\right\}\subseteq X\times X.\]
Since \(G\) is lcsc it is known that \(E_{G}^{X}\) is Borel, i.e. a Borel subset of \(X\times X\), and every class in \(E_{G}^{X}\), namely every \(G\)-orbit, is Borel (see e.g. \cite[Exercise 3.4.6, Theorem 3.3.2]{gao2008}).

The equivalence relation \(E_{G}^{X}\) is referred to as \emph{the} orbit equivalence relation associated with \(X\). It will be convenient for us to relax the notion of orbit equivalence relations as follows:

\begin{defn}
\label{dfn:OER}
Let \(X\) be a Borel \(G\)-space. An {\bf orbit equivalence relation} ({\bf OER}) on \(X\) is a \emph{Borel} subequivalence relation of \(E_{G}^{X}\).\\
An OER \(E\subseteq E_{G}^{X}\) is said to be {\bf \(U\)-positive}, for some identity neighborhood \(U\subset G\), if for every \(x\in X\) there exists \(g\in G\) with \(Ug.x\subseteq\left[x\right]_{E}\).
\end{defn}

The following notion of smoothness is central in the theory:

\begin{defn}[Smoothness]
An equivalence relation \(E\) on a standard Borel space \(X\) is {\bf smooth} if there is a Borel function \(s:X\rightarrow Y\), for some standard Borel space \(Y\), such that for every \(x,y\in X\),
\[\left(x,y\right)\in E\iff s\left(x\right)=s\left(y\right).\] 
\end{defn}

A smooth equivalence relation is always Borel, as it is the inverse image of the diagonal of \(Y\) under the Borel function \(\left(x,y\right)\mapsto\left(s\left(x\right),s\left(y\right)\right)\).

Given a Borel equivalence relation \(E\) on a standard Borel space \(X\), consider the space \(X/E\) of \(E\)-classes of points in \(X\). We recall that a {\bf \(\sigma\)-ideal} on a given set is a nonempty collection of subsets which is closed under taking subsets and countable unions.

\begin{defn}[Idealism]
A Borel equivalence relation \(E\) on a standard Borel space \(X\) is {\bf idealistic} if there is a Borel assignment
\[I:C\mapsto I_{C},\quad C\in X/E,\]
assigning to each \(E\)-class \(C\) a \(\sigma\)-ideal \(I_{C}\) on \(C\) such that \(C\notin I_{C}\). Here, \(I\) is being Borel in the sense that for every Borel set \(A\subseteq X\times X\), the set
\[A_{I}:=\big\{x\in X:\left(A\cap E\right)_{x}\in I_{\left[x\right]_{E}}\big\},\]
is Borel in \(X\), where we denote for a general set \(S\subseteq X\times X\),
\[S_{x}=\left\{y\in X:\left(x,y\right)\in S\right\}.\]
\end{defn}

The next proposition is analogous to \cite[Proposition 5.4.10]{gao2008}:

\begin{prop}
Every positive OER is idealistic.
\end{prop}

\begin{proof}
Let \(E\subseteq E_{G}^{X}\) be a positive OER. For \(\left[x\right]_{E}\in X/E\), define \(I_{\left[x\right]_{E}}\) by
\[S\in I_{\left[x\right]_{E}}\iff \lambda\left(g\in G:g.x\in S\right)=0,\text{ for whatever }S\subseteq\left[x\right]_{E}.\]
By the positivity of \(E\) it is clear that \(\left[x\right]_{E}\notin I_{\left[x\right]_{E}}\) and, from monotonicity and \(\sigma\)-additivity of \(\lambda\), we also see that \(I_{\left[x\right]_{E}}\) is a \(\sigma\)-ideal. In order to see that \(I\) is Borel, we note that for a Borel set \(A\subseteq X\times X\) we have by definition
\[A_{I}=\left\{ x\in X:\lambda\left(g\in G:g.x\in\left(A\cap E\right)_{x}\right)=0\right\}.\]
Since \(\left(x,g\right)\mapsto1_{A\cap E}\left(x,g.x\right)\) is a Borel function, from \cite[Theorem (17.25)]{kechris2012} it follows that also \(x\mapsto\lambda\left(g\in G:g.x\in\left(A\cap E\right)_{x}\right)\) is a Borel function, hence \(A_{I}\subseteq X\) is a Borel set.
\end{proof}

The following characterizations are due to Kechris (see \cite[Theorem 5.4.11]{gao2008}). For a comprehensive treatment we refer to \cite[\S5.4]{gao2008}.

\begin{thm}[Kechris]
Let \(E\) be an idealistic Borel equivalence relation (e.g. a positive OER) on a standard Borel space \(X\). TFAE:
\begin{enumerate}
    \item \(E\) is smooth.
    \item \(E\) admits a \emph{Borel selector}: a Borel function \(s:X\to X\) such that
    \[\left(x,s\left(x\right)\right)\in E\text{ and }\left(x,y\right)\in E\iff s\left(x\right)=s\left(y\right)\text{ for all }x,y\in X.\]
    \item \(E\) admits a \emph{Borel transversal}: a Borel subset \(T\subseteq X\) that intersects every class in exactly one point.
\end{enumerate}
When \(E\) is smooth with a Borel transversal \(T\), there can be found a Borel selector for \(E\) of the form \(s:X\to T\).
\end{thm}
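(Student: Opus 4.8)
The plan is to prove the routine implications $(2)\Rightarrow(1)$, $(2)\Rightarrow(3)$ and $(3)\Rightarrow(2)$ by elementary means, and then to isolate the single substantial implication $(1)\Rightarrow(2)$, which is the only place where idealism is genuinely used. For the easy part, $(2)\Rightarrow(1)$ is immediate, since a Borel selector is itself a Borel reduction witnessing smoothness (take $Y=X$). For $(2)\Rightarrow(3)$ I would first note that a selector $s$ is idempotent: applying the defining equivalence to the pair $\left(x,s\left(x\right)\right)\in E$ gives $s\left(s\left(x\right)\right)=s\left(x\right)$, so $T:=\left\{x:s\left(x\right)=x\right\}=s\left(X\right)$ is Borel and meets each class in exactly the one point $s\left(x\right)$. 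Conversely, for $(3)\Rightarrow(2)$, given a transversal $T$ I would let $s\left(x\right)$ be the unique point of $T\cap\left[x\right]_{E}$; its graph is the Borel set $E\cap\left(X\times T\right)$ with singleton vertical sections, so $s$ is Borel by the standard uniformization of Borel graphs with single-valued sections. None of these three implications uses idealism.

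The whole content is $(1)\Rightarrow(2)$. I would fix a Borel reduction $f:X\to2^{\mathbb{N}}$ witnessing smoothness, so that each class is a fibre $\left[x\right]_{E}=f^{-1}\left(f\left(x\right)\right)$, and select one point per class by an ideal-guided descent. After refining the Borel structure of $X$ to a finer zero-dimensional Polish topology with the same Borel sets in which $f$ becomes continuous (a standard operation of descriptive set theory), I may realize $X$ as a closed subspace of $\mathbb{N}^{\mathbb{N}}$ in which each class is closed. Running over the tree $\mathbb{N}^{<\mathbb{N}}$ of basic clopen cylinders $N_{\sigma}$, I would build for the class $C=\left[x\right]_{E}$ a branch $\sigma_{0}\subset\sigma_{1}\subset\dotsm$ by choosing, at a node $\sigma_{n}$ with $C\cap N_{\sigma_{n}}\notin I_{C}$, the least child $\sigma_{n+1}=\sigma_{n}i$ with $C\cap N_{\sigma_{n+1}}\notin I_{C}$; such a child exists because $I_{C}$ is a $\sigma$-ideal and $C\cap N_{\sigma_{n}}$ is the countable union of the $C\cap N_{\sigma_{n}i}$, while the recursion starts from $C=C\cap N_{\emptyset}\notin I_{C}$. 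Each step is an $E$-invariant Borel condition on $x$: by the Borel idealism axiom applied to $A=X\times N_{\tau}$, for which $\left(A\cap E\right)_{x}=\left[x\right]_{E}\cap N_{\tau}$, the set $\left\{x:\left[x\right]_{E}\cap N_{\tau}\in I_{\left[x\right]_{E}}\right\}$ is Borel. Setting $s\left(x\right)$ to be the unique point of $\bigcap_{n}N_{\sigma_{n}}$ then defines a Borel, $E$-invariant map, and it remains to check that $s\left(x\right)\in\left[x\right]_{E}$.

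The main obstacle is precisely this last claim. Since each $C\cap N_{\sigma_{n}}$ is $I_{C}$-positive it is in particular nonempty, so there are points $c_{n}\in C\cap N_{\sigma_{n}}$ converging to the branch point $s\left(x\right)$; abstract $\sigma$-ideals obey no Baire-category principle, so there is no reason for such a limit to return to $C$ unless $C$ is closed. This is exactly where smoothness is spent: it is what permitted the passage to a topology in which the classes are closed, so that $c_{n}\to s\left(x\right)$ forces $s\left(x\right)\in C$. This also explains why idealism alone cannot suffice, for instance $E_{0}$ is idealistic but, lacking any Borel reduction, admits no such closed-class presentation and no transversal.

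Finally, once $s$ is shown to be a Borel selector, the remaining assertions follow formally: $T=\left\{x:s\left(x\right)=x\right\}$ is the associated Borel transversal, and since $s$ is $E$-invariant with $s\left(x\right)\in\left[x\right]_{E}$ it already takes values in $T$, giving the concluding statement that the selector may be taken of the form $s:X\to T$.
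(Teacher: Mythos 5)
The paper does not prove this statement at all: it is quoted with attribution to Kechris, with a pointer to \cite[Theorem 5.4.11]{gao2008invariant}, so there is no in-paper proof to compare against. Your argument is correct, and it essentially reconstructs the standard proof from the literature. The three easy implications are handled properly: $(2)\Rightarrow(1)$ via the selector as a reduction, $(2)\Rightarrow(3)$ via idempotence of $s$, and $(3)\Rightarrow(2)$ via the fact that a function between standard Borel spaces whose graph $E\cap\left(X\times T\right)$ is Borel with singleton sections is itself Borel (analytic preimages on both sides plus Suslin). Your $(1)\Rightarrow(2)$ is the genuine content and is sound: refine the Polish topology (same Borel sets) so that the reduction $f$ is continuous and $X$ is zero-dimensional, embed $X$ as a closed subspace of $\mathbb{N}^{\mathbb{N}}$ so that each class, being $f^{-1}\left(f\left(x\right)\right)$, is closed; the ideal-guided descent through cylinders works because $C\cap N_{\sigma}=\bigcup_{i}C\cap N_{\sigma i}$ and $I_{C}$ is a $\sigma$-ideal, each step is Borel by applying the idealism axiom to $A=X\times\left(N_{\tau}\cap X\right)$, and the branch point lies in $C$ precisely because positive sets are nonempty and $C$ is closed. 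You correctly identify that this closedness is where smoothness is spent, and your remark that $E_{0}$ is idealistic (it is an orbit equivalence relation, cf.\ \cite[Proposition 5.4.10]{gao2008invariant}) but non-smooth is an apt illustration that idealism alone cannot suffice.

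One small inaccuracy in your closing paragraph: the final assertion of the theorem concerns an \emph{arbitrary} given Borel transversal $T$, whereas you verify it only for the transversal $T=\left\{x:s\left(x\right)=x\right\}$ manufactured from your own selector. This costs nothing, since your $(3)\Rightarrow(2)$ argument already produces, for any given $T$, the selector $s\left(x\right)=$ the unique point of $T\cap\left[x\right]_{E}$, which takes values in $T$ by construction; but the concluding claim should be routed through that implication rather than through the constructed transversal.
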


A notable property of positive OERs is:

\begin{prop}
Let \(X\) be a free Borel \(G\)-space. If \(E\subseteq E_{G}^{X}\) is a positive OER, then every \(G\)-orbit contains at most countably many \(E\)-classes.
\end{prop}

\begin{proof}
Let \(G.x\) be a \(G\)-orbit and put \(G.x/E\) for the set of \(E\)-classes within \(G.x\). For every \(C\in G.x/E\) put \(G_{E}\left(C\right):=\left\{ g\in G:g.x\in C\right\}\). On one hand, those sets form a partition of \(G_{E}\left(x\right)\). On the other hand, for every \(C\in G.x/E\), if we fix any \(g_{C}\in G_{E}\left(C\right)\) then note that \(G_{E}\left(C\right)=G_{E}\left(g_{C}.x\right)g_{C}\). Since \(E\) is \(U\)-positive, \(G_{E}\left(C\right)\) contains a translation of \(U\). Since \(G\) is second countable there are at most countably many disjoint translations of \(U\), hence \(G.x/E\) is at most countable.
\end{proof}

\section{Hypercompact OERs}
\label{sct:hypercomp}

\subsection{Sectional-Hyperfiniteness}

Recall that a countable equivalence relation \(E\) on a standard Borel space \(X\) is {\bf hyperfinite} if it admits a {\bf finite filtration}, namely a sequence \(E_{1}\subseteq E_{2}\subseteq\dotsm\) of equivalence relations with \(E=E_{1}\cup E_{2}\cup\dotsm\) such that each \(E_{n}\) is finite, i.e. every \(E_{n}\)-class is finite. When \(E\) is uncountable, a definition for hyperfiniteness was presented by Connes, Feldman \& Weiss \cite[p. 447]{connes1981}:

\begin{defn}
\label{dfn:secthyperf}
Let \(X\) be a Borel \(G\)-space. We say that \(E_{G}^{X}\) is {\bf sectional-hyperfinite} if for one (hence every) cross section \(\mathfrak{C}\) of \(X\), the countable Borel equivalence relation \(E_{G}^{X}\cap\left(\mathfrak{C}\times\mathfrak{C}\right)\) is hyperfinite.
\end{defn}

It is worth noting, although we do not use it here, that by a well-known theorem of Dougherty, Jackson \& Kechris \cite[Theorem 5.1]{dougherty1994}, sectional-hyperfiniteness is equivalent to \emph{hypersmoothness} (recall that reducible-to-hyperfinite is hypersmooth \cite[\S8]{dougherty1994}).

The following proposition shows that sectional-hyperfiniteness is indeed independent on the choice of the cross section. It was communicated to us, together with its proof, by Alexander Kechris:

\begin{prop}
\label{prop:kechris}
Let \(E\) be a Borel equivalence relation on a standard Borel space \(X\), and let \(\mathfrak{C}\) and \(\mathfrak{C}'\) be Borel cross sections for \(E\). Then \(E\cap\left(\mathfrak{C}\times\mathfrak{C}\right)\) is hyperfinite if and only if \(E\cap\left(\mathfrak{C}'\times\mathfrak{C}'\right)\) is hyperfinite.
\end{prop}

\begin{rem}
\label{rem:meassecthyperf}
Connes, Feldman \& Weiss have proved Proposition \ref{prop:kechris} for nonsingular \(G\)-spaces, i.e. that being sectional-hyperfinite is independent on the choice of the cross section up to a null set \cite[Lemma 15, Corollary 16]{connes1981}.
\end{rem}

In the following proof we assume familiarity with the notion of \emph{Borel reduciblity} between Borel equivalence relations (see the survey \cite{kechris2024}).

\begin{proof}
It is known that a countable Borel equivalence relation reducible-to-hyperfinite is itself hyperfinite (see \cite[Proposition 5.2(2)]{dougherty1994}, \cite[Proposition 1.3(ii)]{jackson2002}). Then it is enough to show that \(E\cap\left(\mathfrak{C}\times\mathfrak{C}\right)\) is Borel reducible to \(E\cap\left(\mathfrak{C}'\times\mathfrak{C}'\right)\) whenever \(\mathfrak{C}\) and \(\mathfrak{C}'\) are cross sections for \(E\). In this setting, consider the Borel set
\[K:=E\cap\left(\mathfrak{C}\times\mathfrak{C}'\right).\]
For every \(x\in\mathfrak{C}\), the section \(\left\{x'\in\mathfrak{C}':\left(x,x'\right)\in K\right\}\) is countable and nonempty. Then by the Lusin--Novikov Uniformization Theorem (see \cite[Theorem (18.10)]{kechris2012}) there is a Borel function \(f:\mathfrak{C}\to\mathfrak{C}'\) such that \(\left(x,f\left(x\right)\right)\in K\) for all \(x\in\mathfrak{C}\). Thus, \(f\) is a Borel reduction of \(E\cap\left(\mathfrak{C}\times\mathfrak{C}\right)\) to \(E\cap\left(\mathfrak{C}'\times\mathfrak{C}'\right)\).
\end{proof}

\subsection{Hypercompactness}

We now came to define hypercompactness, which is a natural uncountable analog of hyperfiniteness. Suppose \(X\) is a Borel \(G\)-space and \(E\subseteq E_{G}^{X}\) is an OER. For every \(x\in X\) denote
\[G_{E}\left(x\right):=\left\{g\in G:\left(x,g.x\right)\in E\right\}.\]
Since \(E\) is Borel this is a Borel set in \(G\). One can routinely verify that
\begin{equation}
\label{eq:2}
G_{E}\left(g.x\right)=G_{E}\left(x\right)g^{-1}\text{ whenever }g\in G_{E}\left(x\right).
\end{equation}
Thus, an OER \(E\subseteq E_{G}^{X}\) is \(U\)-positive according to Definition \ref{dfn:OER} for some neighborhood \(U\subset G\), if for every \(x\in X\) there is \(g\in G\) with \(Ug\subseteq G_{E}\left(x\right)\). Let us now define the basic notions we will discuss.

\begin{defn}
Let \(X\) be a free Borel \(G\)-space. An OER \(E\subseteq E_{G}^{X}\) is {\bf compact} if \(G_{E}\left(x\right)\) is relatively compact for every \(x\in X\). It is {\bf uniformly compact} if, moreover, there is a compact set \(K\subset G\) such that \(G_{E}\left(x\right)\subseteq K\) for every \(x\in X\). In this case we may say that \(E\) is {\bf \(K\)-compact}.
\end{defn}

\begin{defn}
Let \(X\) be a free Borel \(G\)-space. A {\bf compact filtration} of \(E_{G}^{X}\) is an increasing sequence of compact OERs \(E_{1}\subseteq E_{2}\subseteq\dotsm\) with \(E_{G}^{X}=E_{1}\cup E_{2}\cup\dotsm\). Such a compact filtration is {\bf equicompact} if for every \(x\in X\), the compact filtration \(G_{E_{1}}\left(x\right)\subseteq G_{E_{2}}\left(x\right)\subseteq\dotsm\) of \(G\) is equicompact, i.e. every compact set in \(G\) is eventually contained in \(G_{E_{n}}\left(x\right)\).\footnote{As defined in Section \ref{Section: Fundamentals}.}
\end{defn}

\begin{defn}
\label{def:hypercomp}
Let \(X\) be a free Borel \(G\)-space. \(E_{G}^{X}\) is {\bf hypercompact} if either of the following equivalent condition holds:
\begin{enumerate}
    \item \(E_{G}^{X}\) admits a compact filtration.
    \item \(E_{G}^{X}\) admits an equicompact filtration.
    \item \(E_{G}^{X}\) admits an equicompact filtration of uniformly compact OERs.
\end{enumerate}
\end{defn}

The equivalence of the three conditions defining hypercompactness will be proved in the forthcoming Theorem \ref{thm:CFW+}.

\begin{rem}
Unlike the notion of hyperfiniteness which is defined plainly for any countable equivalence relation, the notion of hypercompactness is restricted to orbit equivalence relations and relies on the canonical topology of the group which acts freely. We do not know whether one can define meaningfully hypercompactness for general equivalence relations.
\end{rem}

\begin{exm}[Free transitive actions are hypercompact]
It is an elementary fact that the action of every countable group \(G\) on itself is hyperfinite. Indeed, we may define for every \(n\in\mathbb{N}\) a partition \(T_{n}\) of \(G\) into finite subsets of \(G\), each of which of size \(2^{n}\). We can do this in such a way that \(T_{n+1}\) is coarser than \(T_{n}\) (i.e. every element of \(T_{n}\) is contained in an element of \(T_{n+1}\)) for every \(n\in\mathbb{N}\). Now letting \(E_{n}\) be the equivalence relation whose classes are the elements of \(T_{n}\) for every \(n\in\mathbb{N}\), it is easy to see that \(E_{0}\subseteq E_{1}\subseteq\dotsm\) forms a finite filtration of \(E_{G}^{G}=G\times G\).

Let us show that \(E_{G}^{G}=G\times G\) is hypercompact for every lcsc group \(G\) using essentially the same argument. Fix some compatible proper metric \(d\) on \(G\). Start by picking a countable set \(A:=\left\{a_{1},a_{2},\dotsc\right\}\subset G\) which is \(1\)-discrete (i.e. \(d\left(a_{i},a_{j}\right)\geq 1\) for all distinct \(a_{i},a_{j}\in A\)) and \(2\)-dense (i.e. \(\mathrm{dist}\left(g,A\right)<2\) for every \(g\in G\)). Form a Voronoi tessellation \(\left\{T_{1},T_{2},\dotsc\right\}\) so that \(T_{i}\) consists of points which are \(1\)-distant from \(a_{i}\), for \(i=1,2,\dotsc\) (just as described in Section \ref{sct:voronoi}), from which we obtain the equivalence relation \(E_{0}\) whose classes are \(\left\{T_{1},T_{2},\dotsc\right\}\). Now for every \(n\in\mathbb{N}\) let \(E_{n}\) be the equivalence relation whose classes are
\[\left\{ T_{1}\cup\dotsm T_{2^{n}},T_{2^{n}+1}\cup\dotsm\cup T_{2^{n+1}},T_{2^{n+2}}\cup\dotsm\cup T_{2^{n+2}+1},\dotsc\right\}.\]
Since \(A\) is uniformly discrete, every ball in \(G\) is covered by finitely many of the \(T_{i}\)'s, hence \(E_{0}\subseteq E_{1}\subseteq\dotsm\) is an equicompact filtration of \(E_{G}^{G}=G\times G\).
\end{exm}

\begin{prop}
Let \(X\) be a free Borel \(G\)-space. If \(E_{G}^{X}\) is hypercompact then it admits an equicompact filtration consisting of positive OERs.
\end{prop}

\begin{proof}
Fix an equicompact filtration \(F_{1}\subseteq F_{2}\subseteq\dotsm\) of \(E_{G}^{X}\). For \(x\in X\) put
\[\varphi_{1}\left(x\right)=\inf\left\{ m\geq 1:\lambda\left(G_{F_{m}}\left(x\right)\right)>0\right\}.\]
Since \(E_{G}^{X}\) is clearly positive, necessarily \(\varphi_{1}\left(x\right)<+\infty\) for every \(x\in X\). For every \(m\), since \(F_{m}\) is Borel, from \cite[Theorem (17.24)]{kechris2012} it follows that \(x\mapsto\lambda\left(G_{F_{m}}\left(x\right)\right)\) is a Borel function, hence so is \(\varphi_{1}\). Define then \(E_{1}\subseteq E\) by
\[\left(x,y\right)\in E_{1}\iff\left(x,y\right)\in F_{\varphi_{1}\left(x\right)}.\]
It is clear that \(E_{1}\subseteq X\times X\) is a Borel set. Note that if \(\left(x,y\right)\in F_{m}\) for some \(m\in\mathbb{N}\) and \(\lambda\left(G_{F_{m}}\left(x\right)\right)>0\), then in light of \eqref{eq:2} and the quasi-invariance of \(\lambda\) to multiplication from the right also \(\lambda\left(G_{F_{m}}\left(y\right)\right)>0\), so it easily follows that \(\varphi_{1}\left(x\right)=\varphi_{1}\left(y\right)\). This implies that \(E_{1}\) is an equivalence relation. We also note that \(G_{E_{1}}\left(x\right)=G_{F_{\varphi_{1}\left(x\right)}}\left(x\right)\), hence \(E_{1}\) is compact.

Proceeding by induction, for \(n\geq 2\) put
\[\varphi_{n}\left(x\right)=\inf\left\{ m\geq\varphi_{n-1}\left(x\right)+1:\lambda\left(G_{F_{m}}\left(x\right)\right)>0\right\},\]
and define \(E_{n}\) by
\[\left(x,y\right)\in E_{n}\iff\left(x,y\right)\in F_{\varphi_{n}\left(x\right)}.\]
It is then routine to verify that \(E_{1}\subseteq E_{2}\subseteq\dotsm\) forms an equicompact filtration of \(E_{G}^{X}\) consisting of positive OERs.
\end{proof}

\subsection{A Characterization of Hypercompactness}

We now formulate our first main result, of which Theorem \ref{thm:CFW} is a particular case. It also justifies the three equivalent forms of hypercompactness as in Definition \ref{def:hypercomp}.

\begin{thm}
\label{thm:CFW+}
For a free Borel \(G\)-space \(X\) TFAE:
\begin{enumerate}
    \item \(E_{G}^{X}\) admits a compact filtration.
    \item \(E_{G}^{X}\) is sectional-hyperfinite.
    \item \(E_{G}^{X}\) admits an equicompact filtration.
    \item \(E_{G}^{X}\) admits an equicompact filtration of uniformly compact OERs.
\end{enumerate}
Accordingly, \(E_{G}^{X}\) is {\bf hypercompact} when these equivalent conditions hold.
\end{thm}

The hard part of Theorem \ref{thm:CFW+} is the implication (2)\(\implies\)(3), and its proof relies on the existence of compact OER with sufficient regularity:

\begin{defn}
\label{dfn:eu}
Let \(X\) be a free Borel \(G\)-space, \(U\subset G\) a compact symmetric identity neighborhood, and \(\mathfrak{C}\) a \(U\)-lacunary \(U^{2}\)-cocompact cross section (recall Theorem \ref{thm:cross}) with Voronoi tessellation \(\left\{T_{w}:w\in\mathfrak{C}\right\}\). Define
\[E_{U}:=\{\left(x,y\right)\in X\times X:\exists w\in\mathfrak{C},\,\,x,y\in T_{w}\};\]
thus, \(E_{U}\) is defined by having the equivalence classes \(\left\{T_{w}:w\in\mathfrak{C}\right\}\).
\end{defn}

\begin{prop}
Every \(E_{U}\) as in Definition \ref{dfn:eu} is a \(U\)-positive and \(U^{4}\)-compact smooth OER of \(E_{G}^{X}\).
\end{prop}

\begin{proof}
First, \(E_{U}\) is an OER of \(E_{G}^{X}\) by the definition of the Voronoi tesselation. By construction, the allocation map \(\tau_{o}:X\to\mathfrak{C}\) is a Borel selector into \(\mathfrak{C}\), and thus \(E_{U}\) is smooth. By the \(U\)-lacunarity of \(\mathfrak{C}\) we see that \(E_{U}\) is \(U\)-positive. By the \(U^{2}\)-cocompactness of \(\mathfrak{C}\) we see that \(G_{E_{U}}\left(x\right)\subseteq U^{4}\) for every \(x\in X\), thus \(E_{U}\) is \(U^{4}\)-compact.
\end{proof}

The following technical lemma will be important to proving Theorem \ref{thm:CFW+}.

\begin{lem}
\label{lem:eu}
Every compact filtration \(E_{1}\subseteq E_{2} \subseteq \dotsm\) of \(E_{G}^{X}\) such that \(E_U \subseteq E_{1}\) for some \(E_{U}\) as in Definition~\ref{dfn:eu}, is equicompact.
\end{lem}

\begin{proof}
Let \(E_{U}\subseteq E_{1}\subseteq E_{2}\subseteq\dotsm\) be a compact filtration. Abbreviate \(G_{n}\left(\cdot\right)=G_{E_{n}}\left(\cdot\right)\) and \(\left[\cdot\right]_{n}=\left[\cdot\right]_{E_{n}}\). The key property we will use is:
\begin{equation}
\label{eq:keyprop}
\begin{aligned}
\text{For every } x \in X \text{ and every sufficiently large } n\in \mathbb{N},\\G_{n}\left(x\right)
\text{contains an identity neighborhood in } G.
\end{aligned}
\end{equation}
Let us prove \eqref{eq:keyprop}. Fix an arbitrary \(x\in X\). Since \(\mathfrak{C}_{r_{o}\left(x\right)+1}\left(x\right)\) is a finite set containing \(\mathfrak{C}\left(x\right)\), there exists \(\epsilon=\epsilon\left(x\right)>0\) sufficiently small with
\[\mathfrak{C}_{r_{o}\left(x\right)+\epsilon}\left(x\right)=\mathfrak{C}_{r_{o}\left(x\right)}\left(x\right)=\mathfrak{C}\left(x\right).\]
Look at \(B_{\epsilon/2}\left(e\right)\), the open ball of radius \(\epsilon/2\) around the identity \(e\in G\) with respect to the compatible proper metric defining the Voronoi tessellation, and we aim to show that \(B_{\epsilon/2}\left(e\right)\subseteq G_{n}\left(x\right)\) for every sufficiently large \(n\in\mathbb{N}\). Let \(g\in B_{\epsilon/2}\left(e\right)\) be arbitrary. First note that for whatever \(w\in\mathfrak{C}\left(x\right)\) we have
\[d_{o}\left(g.x,w\right)\leq d_{o}\left(x,w\right)+d_{o}\left(x,g.x\right)<r_{o}\left(x\right)+\epsilon/2,\]
hence by the definition of \(r_{o}\) we have
\[r_{o}\left(g.x\right)<r_{o}\left(x\right)+\epsilon/2.\]
Now pick any \(w\in\mathfrak{C}\left(g.x\right)\), namely \(r_{o}\left(g.x\right)=d_{o}\left(g.x,w\right)\), and we see that
\begin{align*}
d_{o}\left(x,w\right)
&\leq d_{o}\left(x,g.x\right)+d_{o}\left(g.x,w\right)\\
&<\epsilon/2+r_{o}\left(g.x\right)<r_{o}\left(x\right)+\epsilon.
\end{align*}
This means that \(w\in\mathfrak{C}_{r_{o}\left(x\right)+\epsilon}\left(x\right)=\mathfrak{C}\left(x\right)\), so we deduce that \(\mathfrak{C}\left(g.x\right)\subseteq\mathfrak{C}\left(x\right)\). Now since \(\mathfrak{C}\left(x\right)\) is finite and \(E_{1}\subseteq E_{2}\subseteq\dotsm\) is a filtration of \(E_{G}^{X}\), there can be found \(n=n\left(x\right)\in\mathbb{N}\) sufficiently large such that \(\left[\mathfrak{C}\left(x\right)\right]_{n}\subseteq\left[x\right]_{n}\), hence
\[g.x\in\left[\mathfrak{C}\left(g.x\right)\right]_{n}\subseteq\left[\mathfrak{C}\left(x\right)\right]_{n}\subseteq\left[x\right]_{n}.\]
We found that \(B_{\epsilon/2}\left(e\right)\subseteq G_{n}\left(x\right)\), establishing \eqref{eq:keyprop}.

We now deduce the equicompactness of \(E_{U}\subseteq E_{1}\subseteq E_{2}\subseteq\dotsm\). Let \(x\in X\) be arbitrary and let \(C\subset G\) be an arbitrary compact set. For every \(c\in C\), from \eqref{eq:keyprop} we may pick \(n=n\left(x,c\right)\in\mathbb{N}\) such that \(G_{n}\left(c.x\right)\) contains some identity neighborhood \(V\). Enlarging \(n\) if necessary, we may assume that \(\left(x,c.x\right)\in E_{n}\), that is \(c\in G_{n}\left(x\right)\), and using \eqref{eq:2} it follows that
\[Vc\subseteq G_{E_{U}}\left(c.x\right)c=G_{E_{U}}\left(x\right),\text{ hence }c\in\mathrm{Int}\left(G_{n}\left(x\right)\right).\]
We deduce that \(C\subseteq\bigcup_{n\in\mathbb{N}}\mathrm{Int}\left(G_{n}\left(x\right)\right)\), and since \(C\) is compact there exists \(n_{o}\in\mathbb{N}\) such that \(C\subseteq G_{n}\left(x\right)\) for all \(n>n_{o}\).
\end{proof}

\begin{proof}[Proof of Theorem \ref{thm:CFW+}]
Let \(X\) be a free Borel \(G\)-space with OER \(E_{G}^{X}\).

(1)\(\implies\)(2): Fix a compact filtration \(E_{1}\subseteq E_{2}\subseteq\dotsm\) of \(E_{G}^{X}\), and let \(\mathfrak{C}\) be a \(U\)-lacunary cross section for \(E_{G}^{X}\) (we do not use cocompactness here). Put \(F_{n}:=E_{n}\cap\left(\mathfrak{C}\times\mathfrak{C}\right)\) for \(n\in\mathbb{N}\). Clearly \(F_{1}\subseteq F_{2}\subseteq\dotsm\) is a filtration of \(E_{G}^{X}\cap\left(\mathfrak{C}\times\mathfrak{C}\right)\), so we have left to show that \(F_{n}\) is a finite equivalence relation for every \(n\in\mathbb{N}\). Indeed, since the action is free, for every \(x\in X\), the cardinality of the \(F_{n}\)-class of \(x\) is the cardinality of the set \(G_{F_{n}}\left(x\right)\). However, \(G_{F_{n}}\left(x\right)\) is \(U\)-discrete and contained in \(G_{E_{n}}\left(x\right)\), which is relatively compact, and therefore \(G_{F_{n}}\left(x\right)\) is finite.

(2)\(\implies\)(3): Let \(U\subset G\) be some identity neighborhood and, using Theorem \ref{thm:cross}, pick a \(U\)-lacunary \(U^{2}\)-cocompact cross section \(\mathfrak{C}\) for \(X\), and let \(E_{U}\subseteq E_{G}^{X}\) be as in Definition \ref{dfn:eu}. By the assumption that \(E_{G}^{X}\) is sectional-hyperfinite, \(E_{G}^{X}\cap\left(\mathfrak{C}\times\mathfrak{C}\right)\) is hyperfinite. Let then \(F_{1}\subseteq F_{2}\subseteq\dotsm\) be a finite filtration of \(E_{G}^{X}\cap\left(\mathfrak{C}\times\mathfrak{C}\right)\). With the Borel selector \(\tau_{o}:X\to\mathfrak{C}\) of \(E_{U}\), define
\[E_{n}:=\left\{ \left(x,y\right)\in X\times X:\left(\tau_{o}\left(x\right),\tau_{o}\left(y\right)\right)\in F_{n}\right\},\quad n\in\mathbb{N}.\]
It can be directly verified that \(E_{1}\subseteq E_{2}\subseteq\dotsm\) forms a filtration of \(E_{G}^{X}\). Since \(\left(x,y\right)\in E_{U}\iff\tau_{o}\left(x\right)=\tau_{o}\left(y\right)\) it is clear that \(E_{U}\subseteq E_{1}\). Let us verify that \(E_{n}\) is compact for each \(n\in\mathbb{N}\). To this end we first observe that for every \(x\in X\), the \(F_{n}\)-equivalence class of \(\tau_{o}\left(x\right)\) is of the form
\[\left[\tau_{o}\left(x\right)\right]_{F_{n}}=\left\{ \tau_{o}\left(s.x\right):s\in S_{n}\left(x\right)\right\}\text{ for some finite set }S_{n}\left(x\right)\subset G.\]
Indeed, fix any \(s\in G\) with \(\tau_{o}\left(x\right)=s.x\), and then every \(y\in \left[\tau_{o}\left(x\right)\right]_{F_{n}}\) is of the general form \(y=g.\tau_{o}\left(x\right)=gs.x\) for a particular \(g\in G\), and since \(y\in\mathfrak{C}\) we have \(y=gs.x=\tau_{o}\left(gs.x\right)\). We can now deduce the compactness of \(E_{n}\), since for every \(x\in X\) we have
\begin{align*}
G_{E_{n}}\left(x\right)
&=\left\{ g\in G:\tau_{o}\left(g.x\right)\in \left[\tau_{o}\left(x\right)\right]_{F_{n}}\right\}\\
&=\bigcup\nolimits_{s\in S_{n}\left(x\right)}\left\{ g\in G:\tau_{o}\left(g.x\right)=\tau_{o}\left(s.x\right)\right\}\\
&=\bigcup\nolimits_{s\in S_{n}\left(x\right)}\left\{ g\in G:\left(g.x,s.x\right)\in E_{U}\right\}\\
&=\bigcup\nolimits_{s\in S_{n}\left(x\right)}G_{E_{U}}\left(s.x\right)=G_{E_{U}}\left(x\right)S_{n}\left(x\right)^{-1}\subseteq U^{4}S_{n}\left(x\right)^{-1},
\end{align*}
and since \(U\) is compact while \(S_{n}\left(x\right)\) is finite, it follows that \(G_{E_{n}}\left(x\right)\) is relatively compact. Finally, since \(E_{G}^{X}\) admits a compact filtration whose first element is \(E_{U}\), by Lemma \ref{lem:eu} it is an equicompact filtration.

(3)\(\implies\)(4): Let \(E_{1}\subseteq E_{2}\subseteq\dotsm\) be an equicompact filtration of \(E_{G}^{X}\). Fix an equicompact filtration \(K_{1}\subseteq K_{2}\subseteq\dotsm\) of \(G\). For every \(n\in\mathbb{N}\), define
\[E_{n}^{m}:=\left\{ \left(x,y\right)\in E_{n}:G_{E_{n}}\left(x\right)\cup G_{E_{n}}\left(y\right)\subseteq K_{m}\right\},\quad m\in\mathbb{N}.\]
Then each \(E_{n}^{m}\subseteq E_{n}\) is a \(K_{m}\)-compact OER, and we have \(E_{n}=\bigcup_{m\in\mathbb{N}}E_{n}^{m}\). Note also that \(E_{m}^{m}\subseteq E_{m}^{m+1}\subseteq E_{m+1}^{m+1}\) for every \(m\in\mathbb{N}\). It then follows that \(E_{1}^{1}\subseteq E_{2}^{2}\subseteq\dotsm\) is an equicompact filtration such that each \(E_{m}^{m}\) is \(K_{m}\)-compact, and thus uniformly compact.

(4)\(\implies\)(1): This is obvious.
\end{proof}

\section{Measure-Hypercompact OERs}

Let us recall the basic setup of the nonsingular ergodic theory. A {\bf standard measure space} is a measure space \(\left(X,\mu\right)\) such that \(X\) is a standard Borel space and \(\mu\) is a Borel \(\sigma\)-finite measure on \(X\). A Borel set \(A\subseteq X\) is said to be {\bf \(\mu\)-null} if \(\mu\left(A\right)=0\) and it is said to be {\bf \(\mu\)-conull} if \(\mu\left(X\backslash A\right)=0\). A Borel property of the points of \(X\) will be said to hold {\bf modulo \(\mu\)} or {\bf \(\mu\)-a.e.} if the set of points satisfying this property is \(\mu\)-conull.

A {\bf nonsingular \(G\)-space} is a standard measure space \(\left(X,\mu\right)\) such that \(X\) is a Borel \(G\)-space and \(\mu\) is quasi-invariant to the action of \(G\) on \(X\); that is, the measures \(\mu\circ g^{-1}\) and \(\mu\) are mutually absolutely continuous for every \(g\in G\). A particular case of a nonsingular \(G\)-space is {\bf measure preserving \(G\)-space}, in which we further have \(\mu\circ g^{-1}=\mu\) for every \(g\in G\). When \(\mu\) is a probability measure, we stress this by using the terminology {\bf nonsingular probability \(G\)-space} or {\bf probability preserving \(G\)-space}.

\begin{defn}
\label{dfn:meashyper}
Let \(\left(X,\mu\right)\) be a nonsingular \(G\)-space. We say that \(E_{G}^{X}\) is {\bf \(\mu\)-sectional-hyperfinite} or {\bf \(\mu\)-hypercompact} if there exists a \(\mu\)-conull \(G\)-invariant set \(X_{o}\subseteq X\) such that
\[E_{G}^{X_{o}}:=E_{G}^{X}\cap\left(X_{o}\times X_{o}\right)\]
is sectional hyperfinite or hypercompact, respectively.
\end{defn}

When the measure \(\mu\) is clear in the context, we may call these notions by {\bf measure-sectional-hyperfinite} or {\bf measure-hypercompact}. As mentioned in Remark \ref{rem:meassecthyperf}, Connes, Feldman \& Weiss proved that being measure-sectional-hyperfinite with respect to one cross section implies the same for all other cross sections.

The requirement in Definition \ref{dfn:meashyper} that \(X_{o}\) would be \(G\)-invariant is necessary to defining hypercompactness, as this notion is define exclusively for OERs, but it is unnecessary to define sectional-hyperfiniteness, since hyperfiniteness is defined for general countable equivalence relations. Note however that also in measure-sectional-hyperfiniteness we may assume that it is the case that \(X_{o}\) is \(G\)-invariant; indeed, when this is not the case, there is a Borel set \(X_{oo}\subseteq X_{o}\) such that the \(G\)-invariant set \(G.X_{oo}\) is Borel and \(\mu\)-conull.\footnote{Proof: by Varadarajan's compact model theorem \cite[Ch.~V, \S3, Theorem~5.7]{varadarajan1968} it is sufficient to assume that \(G\) acts continuously. Passing to a finite equivalent measure, the tightness of finite measures (see \cite[\S17.C]{kechris2012}) ensures that \(X_{o}\) contains a \(\sigma\)-compact \(\mu\)-conull set \(X_{oo}\). Since \(G\) and \(X_{oo}\) are both \(\sigma\)-compact, \(G.X_{oo}\) is \(\sigma\)-compact hence Borel.} Now \(E_{G}^{G.X_{oo}}\) remains sectional-hyperfinite, because every cross section for \(X_{o}\) can be restricted to a cross section for \(X_{oo}\) and, generally speaking, every cross section for \(X_{oo}\) is a cross section for \(G.X_{oo}\).

In light of this discussion, Corollary \ref{cor:OW} is nothing but a reformulation of the celebrated Connes--Feldman--Weiss Theorem using Theorem \ref{thm:CFW}:

\begin{proof}[Proof of Corollary \ref{cor:OW}]
Let \(G\) be an amenable group and \(\left(X,\mu\right)\) be a nonsingular \(G\)-space. By the Connes--Feldman--Weiss Theorem there exists a \(\mu\)-conull set \(X_{o}\subseteq X\) such that \(E_{G}^{X_{o}}\) is sectional-hyperfinite. By the above discussion we may assume that \(X_{o}\) is \(G\)-invariant, thus \(E_{G}^{X_{o}}\) is a sectional hypercompact OER. Then by Theorem \ref{thm:CFW} we deduce that \(E_{G}^{X_{o}}\) is hypercompact, thus \(E_{G}^{X}\) is \(\mu\)-hypercompact.
\end{proof}

\section{Conditional Expectation on Compact OERs}

Here we introduce a formula for conditional expectation on the \(\sigma\)-algebra of sets that are invariant to a compact OER. It will be useful both in relating hypercompactness to amenability and in establishing the Random Ratio Ergodic Theorem.

Let \(X\) be a Borel \(G\)-space. For an OER \(E\subseteq E_{G}^{X}\) we denote by
\[\mathrm{Inv}\left(E\right)\]
the \(E\)-invariant \(\sigma\)-algebra whose elements are the Borel sets in \(X\) which are unions of \(E\)-classes. Thus, a Borel set \(A\subseteq X\) is in \(\mathrm{Inv}\left(E\right)\) if and only if for every \(x\in X\), either the entire \(E\)-class of \(x\) is in \(A\) or that the entire \(E\)-class of \(x\) is outside \(A\). The \(E_{G}^{X}\)-invariant \(\sigma\)-algebra will be abbreviated by
\[\mathrm{Inv}_{G}\left(X\right):=\mathrm{Inv}\left(E_{G}^{X}\right).\]
Thus, a Borel set \(A\subseteq X\) is in \(\mathrm{Inv}_{G}\left(X\right)\) if it is a \(G\)-invariant set.

From a given filtration \(E_{1}\subseteq E_{2}\subseteq\dotsm\) of \(E_{G}^{X}\), we obtain an {\bf approximation} of \(\mathrm{Inv}_{G}\left(X\right)\) by the sequence
\[\mathrm{Inv}\left(E_{1}\right)\supset\mathrm{Inv}\left(E_{2}\right)\supset\dotsm,\]
that satisfies
\[\mathrm{Inv}_{G}\left(X\right)=\mathrm{Inv}\left(E_{1}\right)\cap\mathrm{Inv}\left(E_{2}\right)\cap\dotsm.\]

Suppose \(\left(X,\mu\right)\) is a nonsingular \(G\)-space. It then has an associated {\bf Radon--Nikodym cocycle}, which is a Borel function
\begin{equation}
\label{eq:rncoc}
\nabla:G\times X\to\mathbb{R}_{>0},\quad\nabla:\left(g,x\right)\mapsto\nabla_{g}\left(x\right),
\end{equation}
that satisfies the cocycle identity
\[\nabla_{gh}\left(x\right)=\nabla_{g}\left(h.x\right)\nabla_{h}\left(x\right),\quad g,h\in G,\,x\in X,\]
and has the property
\[\nabla_{g}\left(\cdot\right)=\frac{d\mu\circ g}{d\mu}\left(\cdot\right)\text{ in }L^{1}\left(X,\mu\right)\text{ for each }g\in G.\]
The fact that there can be found a version of the Radon--Nikodym cocycle that satisfies the cocycle identity pointwise is nontrivial, and is due to the Mackey Cocycle Theorem (see e.g. \cite[Lemma 5.26, p. 179]{varadarajan1968}).

Using nothing but the Fubini Theorem, the Radon--Nikodym property of \(\nabla\) can be put generally in the following formula:
\begin{equation}
\label{eq:nabla}\tag{\(\dagger\)}
\begin{aligned}
&\iint\nolimits_{G\times X}\nabla_{g}\left(x\right)f_{0}\left(g.x\right)f_{1}\left(x\right)\varphi\left(g\right)d\lambda\otimes\mu\left(g,x\right)\\
&\qquad\quad=\iint\nolimits_{G\times X}f_{0}\left(x\right)f_{1}\left(g^{-1}.x\right)\varphi\left(g\right)d\lambda\otimes\mu\left(g,x\right),
\end{aligned}
\end{equation}
\[\text{ for all Borel functions } f_{0},f_{1}:X\to\left[0,\infty\right)\text{ and }\varphi:G\to\left[0,\infty\right).\]

We introduce the main formula we need for conditional expectation on a compact equivalence relation. For finite OERs in the context of countable acting groups, this formula was mentioned in \cite[\S1]{danilenko2019}  (cf. the {\it generalized Bayes' law} in \cite[Chapter 2, \S7, pp. 230-232]{shiryaev1996}).

Let \(E\subseteq E_{G}^{X}\) be a compact OER. With the Radon--Nikodym cocycle \eqref{eq:rncoc}, define an operator of measurable functions on \(X\) by
\[S^{E}:f\mapsto S_{f}^{E},\quad S_{f}^{E}\left(x\right):=\int_{G_{E}\left(x\right)}\nabla_{g}\left(x\right)f\left(g.x\right)d\lambda\left(g\right).\]

\begin{prop}
\label{prop:condexp}
For a compact positive OER \(E\subseteq E_{G}^{X}\), the conditional expectation of every \(f\in L^{1}\left(X,\mu\right)\) with respect to \(\mathrm{Inv}\left(E\right)\) has the formula
\[\mathbb{E}\left(f\mid\mathrm{Inv}\left(E\right)\right)\left(x\right)=S_{f}^{E}\left(x\right)/S_{1}^{E}\left(x\right)=\frac{\int_{G_{E}\left(x\right)}\nabla_{g}\left(x\right)f\left(g.x\right)d\lambda\left(g\right)}{\int_{G_{E}\left(x\right)}\nabla_{g}\left(x\right)d\lambda\left(g\right)},\]
for \(\mu\)-a.e. \(x\in X\) (depending on \(f\)).
\end{prop}

We will start by a simple lemma that demonstrates that compact OERs are \emph{dissipative} in nature (cf. \cite[\S1.0.1]{avraham2024hopf}).

\begin{lem}
Let \(E\subseteq E_{G}^{X}\) be a compact positive OER. Then for every \(f\in L^{1}\left(X,\mu\right)\) we have \(S_{f}^{E}\left(x\right)<+\infty\) for \(\mu\)-a.e. \(x\in X\) (depending on \(f\)).
\end{lem}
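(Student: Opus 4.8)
The plan is to prove the stronger assertion that $\int_{X}S_{\left|f\right|}^{E}\,d\mu<\infty$; finiteness $\mu$-a.e. of $S_{\left|f\right|}^{E}$ then follows at once, and hence so does that of $S_{f}^{E}=S_{f^{+}}^{E}-S_{f^{-}}^{E}$. Accordingly I may and do assume $f\geq 0$ throughout.

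First I would exploit the compactness of $E$. Since $E$ is $K$-compact for some compact $K\subset G$, we have $G_{E}\left(x\right)\subset K$ for every $x\in X$, so that, using $f\geq 0$,
$$S_{f}^{E}\left(x\right)=\int_{G_{E}\left(x\right)}\nabla_{g}\left(x\right)f\left(g.x\right)d\lambda\left(g\right)\leq\int_{G}1_{K}\left(g\right)\nabla_{g}\left(x\right)f\left(g.x\right)d\lambda\left(g\right).$$
Integrating this inequality over $X$ and invoking Tonelli's theorem (all integrands are nonnegative) to interchange the order of integration yields
$$\int_{X}S_{f}^{E}\,d\mu\leq\iint\nolimits_{G\times X}\nabla_{g}\left(x\right)f\left(g.x\right)1_{K}\left(g\right)\,d\lambda\otimes\mu\left(g,x\right).$$

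The crux is then to feed this into the master identity \eqref{eq: nabla} with the choices $f_{0}=f$, $f_{1}\equiv 1$ and $\varphi=1_{K}$. The right-hand side of \eqref{eq: nabla} collapses to
$$\iint\nolimits_{G\times X}f\left(x\right)1_{K}\left(g\right)\,d\lambda\otimes\mu\left(g,x\right)=\lambda\left(K\right)\cdot\left\Vert f\right\Vert_{L^{1}\left(X,\mu\right)},$$
whence $\int_{X}S_{f}^{E}\,d\mu\leq\lambda\left(K\right)\left\Vert f\right\Vert_{L^{1}}<\infty$, the finiteness using that $K$ is compact and $\lambda$ is a Haar measure (hence finite on compacta). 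A nonnegative measurable function with finite integral is finite $\mu$-a.e., which is exactly the claim.

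I do not expect a genuine obstacle here; the only points demanding care are the reduction to $f\geq 0$ and the correct matching of the three variables when applying \eqref{eq: nabla}. I note that the positivity hypothesis on $E$ is not used in this bound—only compactness is—consistent with the fact that positivity is needed instead to guarantee that the normalizing denominator $S_{1}^{E}$ is strictly positive in the conditional-expectation formula of Proposition \ref{Proposition: conditional expectation}.
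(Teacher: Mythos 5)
Your proof is correct and takes essentially the same approach as the paper: both arguments bound the relevant double integral via the identity \eqref{eq: nabla} with $\varphi$ the indicator of a compact set, use Tonelli, and invoke finiteness of Haar measure on compacta. The only difference is organizational --- the paper integrates over an exhausting family of balls $B_{r}$ (which yields a single conull set, depending only on $f$, that works simultaneously for every compact OER), whereas you use the uniform $K$ from the definition of compactness directly and thereby get the marginally stronger conclusion $\int_{X}S_{\left|f\right|}^{E}\,d\mu\leq\lambda\left(K\right)\left\Vert f\right\Vert _{L^{1}\left(X,\mu\right)}$; your closing observation that positivity of $E$ is not needed here is likewise consistent with the paper's proof.
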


\begin{proof}
Pick a compatible proper metric on \(G\) and for \(r>0\) let \(B_{r}\) be the ball of radius \(r\) around the identity with respect to this metric. Recalling the identity \eqref{eq:nabla}, for every \(f\in L^{1}\left(X,\mu\right)\) and every \(r>0\) we have
\begin{align*}
\iint\nolimits_{B_{r}\times X}\nabla_{g}\left(x\right)f\left(g.x\right)d\lambda\otimes\mu\left(g,x\right)
& =\iint\nolimits_{G\times X}f\left(x\right)1_{B_{r}}\left(g^{-1}\right)d\lambda\otimes\mu\left(g,x\right)\\
& =\lambda\big(B_{r}^{-1}\big)\left\Vert f\right\Vert_{L^{1}\left(X,\mu\right)}<+\infty.
\end{align*}
Since this holds for every \(r>0\), there exists a \(\mu\)-conull set \(A_{f}\) such that
\[\int_{B_{r}}\nabla_{g}\left(x\right)f\left(g.x\right)d\lambda\left(g\right)<+\infty\text{ for every }r>0\text{ and every }x\in A_{f}.\]
Then for every \(x\in A_{f}\), since \(G_{E}\left(x\right)\) is relatively compact it is contained in some \(B_{r}\) for some sufficiently large \(r>0\), hence \(S_{f}^{E}\left(x\right)<+\infty\).
\end{proof}

\begin{proof}[Proof of Proposition \ref{prop:condexp}]
Note that for every \(x\in X\), if \(g\in G_{E}\left(x\right)\) then \(1_{E}\left(g.x,hg.x\right)=1_{E}\left(x,hg.x\right)\) for all \(h\in G\). Let \(\varDelta:G\to\mathbb{R}_{>0}\) be the modular function of \(G\) with respect to \(\lambda\). From the cocycle property of \(\nabla\) it follows that whenever \(g\in G_{E}\left(x\right)\),
\begin{equation}
\label{eq:S}\tag{I}
\small
\begin{aligned}
& S_{f}^{E}\left(g.x\right)\\
&\quad=\int_{G}1_{E}\left(g.x,hg.x\right)\nabla_{h}\left(g.x\right)f\left(hg.x\right)d\lambda\left(h\right)\\
&\quad=\nabla_{g}\left(x\right)^{-1}\int_{G}1_{E}\left(x,hg.x\right)\nabla_{hg}\left(x\right)f\left(hg.x\right)d\lambda\left(h\right)\\
&\quad=\nabla_{g}\left(x\right)^{-1}\varDelta\left(g\right)\int_{G}1_{E}\left(x,h.x\right)\nabla_{h}\left(x\right)f\left(h.x\right)d\lambda\left(h\right)\\
&=\nabla_{g}\left(x\right)^{-1}\varDelta\left(g\right)S_{f}^{E}\left(x\right).
\end{aligned}
\end{equation}
In particular, the function \(X\to\mathbb{R}_{>0}\), \(x\mapsto S_{f}^{E}\left(x\right)/S_{1}^{E}\left(x\right)\), is \(E\)-invariant, i.e. \(\mathrm{Inv}\left(E\right)\)-measurable. From all the above we obtain the following identity:
\begin{equation}
\label{eq:Sf}\tag{II}
\small
\begin{aligned}
&\int_{X}f\left(x\right)d\mu\left(x\right)\\
&\quad=\int_{X}f\left(x\right)S_{1}^{E}\left(x\right)S_{1}^{E}\left(x\right)^{-1}d\mu\left(x\right)\\
&\quad=\iint\nolimits_{G\times X}\nabla_{g}\left(x\right)f\left(x\right)1_{E}\left(x,g.x\right)S_{1}^{E}\left(x\right)^{-1}d\lambda\otimes\mu\left(g,x\right)\\
{\scriptstyle \eqref{eq:nabla}}&\quad=\iint\nolimits_{G\times X}f\left(g^{-1}.x\right)1_{E}\left(g^{-1}.x,x\right)S_{1}^{E}\left(g^{-1}.x\right)^{-1}d\lambda\otimes\mu\left(g,x\right)\\
{\scriptstyle \eqref{eq:S}}&\quad=\iint\nolimits_{G\times X}f\left(g^{-1}.x\right)1_{E}\left(g^{-1}.x,x\right)\nabla_{g^{-1}}\left(x\right)\varDelta\left(g\right)S_{1}^{E}\left(x\right)^{-1}d\lambda\otimes\mu\left(g,x\right)\\
&\quad=\iint\nolimits_{G\times X}f\left(g.x\right)1_{E}\left(g.x,x\right)\nabla_{g}\left(x\right)S_{1}^{E}\left(x\right)^{-1}d\lambda\otimes\mu\left(g,x\right)\\
&=\int_{X}S_{f}^{E}\left(x\right)S_{1}^{E}\left(x\right)^{-1}d\mu\left(x\right).
\end{aligned}
\end{equation}
Finally, note that for every \(E\)-invariant function \(\psi\in L^{\infty}\left(X,\mu\right)\) we have
\[S_{f\cdot\psi}^{E}\left(x\right)=S_{f}^{E}\left(x\right)\cdot\psi\left(x\right)\text{ for }\mu\text{-a.e. }x\in X,\]
so the identity \eqref{eq:Sf} when applied to \(f\cdot\psi\) implies the identity
\[\int_{X}f\left(x\right)\psi\left(x\right)d\mu\left(x\right)=\int_{X}S_{f}^{E}\left(x\right)S_{1}^{E}\left(x\right)^{-1}\psi\left(x\right)d\mu\left(x\right).\]
Then the \(E\)-invariance of \(S_{f}^{E}\left(x\right)S_{1}^{E}\left(x\right)^{-1}\) with the last identity readily imply that \(S_{f}^{E}\left(\cdot\right)S_{1}^{E}\left(\cdot\right)^{-1}=\mathbb{E}\left(f\mid\mathrm{Inv}\left(E\right)\right)\) in \(L^{1}\left(X,\mu\right)\).
\end{proof}

\section{Asymptotic Invariance of Hypercompact OERs}
\label{sct:asympinv}

The following asymptotic invariance property in hyperfinite equivalence relations was proved by Danilenko \cite[Lemma 2.2]{danilenko2001} and has been used by him for the random ratio ergodic theorem \cite[Theorem A.1]{danilenko2019}. We will show that Danilenko's proof can be adapted to hypercompact OERs as well.

Let \(G\) be an lcsc group with a left Haar measure \(\lambda\). A compact set \(S\subset G\) is said to be {\bf \(\left[K,\epsilon\right]\)-invariant}, for some compact set \(K\subset G\) and \(\epsilon>0\), if
\[\lambda\left(g\in S:Kg\subseteq S\right)>\left(1-\epsilon\right)\lambda\left(S\right).\]

\begin{prop}[Essentially Danilenko]
\label{prop:danil}
Let \(\left(X,\mu\right)\) be a probability preserving \(G\)-space. Suppose \(E_{G}^{X}\) is hypercompact with an equicompact filtration \(E_{1}\subseteq E_{2}\subseteq\dotsm\). Then for every compact set \(K\subset G\) and \(\epsilon>0\), it holds that
\[\liminf_{n\to\infty}\mu\left(x\in X:G_{E_{n}}\left(x\right)\text{is}\,\left[K,\epsilon\right]\text{-invariant}\right)>1-\epsilon.\]
\end{prop}

We first formulate a simple probability fact:

\begin{lem}
\label{lemp:prob}
Let \(W_{n}\xrightarrow[n\to\infty]{}W\) be an \(L^{1}\)-convergent sequence of random variables taking values in \(\left[0,1\right]\). Then for every \(0<\epsilon\leq 1/2\),
\[\text{if }\mathbb{E}\left(W\right)>1-\epsilon^{2}\text{ then }\liminf_{n\to\infty}\mathbb{P}\left(W_{n}>1-\epsilon\right)>1-\epsilon.\]
\end{lem}

\begin{proof}
For every \(n\), since \(W_{n}\) is taking values in \(\left[0,1\right]\),
\begin{align*}
\mathbb{E}\left(W_{n}\right)
&=\int_{\left\{ W_{n}>1-\epsilon\right\} }W_{n}d\mathbb{P}+\int_{\left\{ W_{n}\leq1-\epsilon\right\} }W_{n}d\mathbb{P}\\
&\leq\mathbb{P}\left(W_{n}>1-\epsilon\right)+\left(1-\epsilon\right)\mathbb{P}\left(W_{n}\leq1-\epsilon\right)\\
&=1-\epsilon\mathbb{P}\left(W_{n}\leq1-\epsilon\right).
\end{align*}
Since the left hand-side converges to \(\mathbb{E}\left(W\right)>1-\epsilon^{2}\) as \(n\to+\infty\), we deduce that \(\mathbb{P}\left(W_{n}\leq1-\epsilon\right)<\epsilon\) for every sufficiently large \(n\).
\end{proof}

\begin{proof}[Proof of Proposition \ref{prop:danil}]
For \(m\in\mathbb{N}\) let \(X_{m}\left(K\right)=\left\{ x\in X:K\subseteq G_{m}\left(x\right)\right\}\). From the equicompactness, \(X_{m}\left(K\right)\nearrow X\) as \(m\nearrow+\infty\), so pick \(m_{o}\in\mathbb{N}\) such that \(\mu\left(X_{m_{o}}\left(K\right)\right)>1-\epsilon^{2}\). By L\'{e}vy's martingale convergence theorem (see e.g. \cite[Chapter VII, \S4, Theorem 3 \& Problem 1]{shiryaev1996}),
\[W_{n}:=\mathbb{E}\left(1_{X_{m_{o}}\left(K\right)}\mid \mathrm{Inv}\left(E_{n}\right)\right)\xrightarrow[n\to+\infty]{L^{1}\left(\mu\right)}\mathbb{E}\left(1_{X_{m_{o}}\left(K\right)}\mid\mathrm{Inv}_{G}\left(X\right)\right):=W.\]
Since \(\mathbb{E}\left(W\right)=\mu\left(X_{m_{o}}\right)>1-\epsilon^{2}\), from Lemma \ref{lemp:prob} it follows that there exists \(n_{o}\in\mathbb{N}\), say \(n_{o}>m_{o}\), such that \(\mu\left(W_{n}>1-\epsilon\right)>1-\epsilon\) for every \(n>n_{o}\).

Abbreviate \(G_{n}\left(\cdot\right)=G_{E_{n}}\left(\cdot\right)\). For every \(n>n_{o}\), by the formula for conditional expectation as in Proposition \ref{prop:condexp} in the probability preserving case, since \(G_{m_{o}}\left(x\right)\subseteq G_{n}\left(x\right)\) and \(G_{n}\left(g.x\right)=G_{n}\left(x\right)g^{-1}\) for \(g\in G_{n}\left(x\right)\), we get
\begin{align*}
W_{n}\left(x\right)
&=\frac{\lambda\left(g\in G_{n}\left(x\right):K\subseteq G_{m_{o}}\left(g.x\right)\right)}{\lambda\left(G_{n}\left(x\right)\right)}\\
&\leq\frac{\lambda\left(g\in G_{n}\left(x\right):K\subseteq G_{n}\left(g.x\right)\right)}{\lambda\left(G_{n}\left(x\right)\right)}\\
&=\frac{\lambda\left(g\in G_{n}\left(x\right):Kg\subseteq G_{n}\left(x\right)\right)}{\lambda\left(G_{n}\left(x\right)\right)}
\end{align*}
Hence, for all \(n>n_{o}\),
\[\mu\left(x\in X:G_{n}\left(x\right)\text{is}\,\left[K,\epsilon\right]\text{-invariant}\right)\geq\mu\left(W_{n}>1-\epsilon\right)>1-\epsilon.\qedhere\]
\end{proof}

We can now prove Theorem \ref{thm:pmp}. First we mention that the setting of Theorem \ref{thm:pmp} is never void, since every lcsc group admits a probability preserving free \(G\)-space (see e.g. \cite[Remark 8.3]{avraham2024hopf}). Recall that an lcsc group \(G\) is {\bf amenable} if it admits a {\bf F\o lner sequence}, namely a compact filtration \(S_{1}\subseteq S_{2}\subseteq\dotsm\) of \(G\) such that for every compact set \(K\subset G\) and every \(\epsilon>0\), there is \(n_{o}\in\mathbb{N}\) such that \(S_{n}\) is \(\left[K,\epsilon\right]\)-invariant for all \(n>n_{o}\).

\begin{proof}[Proof of Theorem \ref{thm:pmp}]
One implication is a particular case of Theorem \ref{thm:CFW}. For the other implication, suppose that \(\left(X,\mu\right)\) is a probability preserving \(G\)-space and that, up to a \(\mu\)-null set, \(E_{G}^{X}\) admits an equicompact filtration \(E_{1}\subseteq E_{2}\subseteq\dotsm\). Fix an equicompact filtration \(K_{1}\subseteq K_{2}\subseteq\dotsm\) of \(G\), and a sequence \(\epsilon_{1},\epsilon_{2},\dotsm\) of positive numbers with \(\epsilon_{1}+\epsilon_{2}+\dotsm<+\infty\). From Proposition \ref{prop:danil} together with the Borel--Cantelli Lemma, one deduces that for \(\mu\)-a.e. \(x\in X\) and every \(m\in\mathbb{N}\), \(G_{E_{n}}\left(x\right)\) is \(\left[K_{m},\epsilon_{m}\right]\)-invariant for all but finitely many \(n\in\mathbb{N}\). It then follows that \(G_{1}\left(x\right)\subseteq G_{2}\left(x\right)\subseteq\dotsm\) is a F\o lner sequence of \(G\) for \(\mu\)-a.e. \(x\in X\), and in particular \(G\) is amenable.
\end{proof}

\section{Two Random Ratio Ergodic Theorems}
\label{sct:tworret}

The classical {\it Random Ergodic Theorem} of Kakutani regards the asymptotic behaviour of ergodic averages of the form
\[{\textstyle \frac{1}{n}\sum_{k=1}^{n}f\left(z_{k}.x\right),\quad n=1,2,\dotsc,}\]
where \(x\) is a point in a \(G\)-space, \(\left(z_{1},z_{2},\dotsc\right)\) is a random walk on \(G\), and \(f\) is an integrable function. See e.g. \cite[\S3]{furman2002} and the references therein. Performing a random walk on the acting group enables one the use of the classical ergodic averages along the natural F\o lner sequence of \(\mathbb{N}\). Alternatively, one may propose other methods to pick compactly many group elements at random in each step, and study the asymptotic behaviour of the corresponding ergodic averages. This was done by Danilenko for countable amenable groups \cite[Appendix A]{danilenko2019} using the Ornstein--Weiss Theorem about hyperfiniteness, and in the following we extend this method to uncountable amenable groups using Theorem \ref{thm:CFW} about hypercompactness.

\begin{thm}[First Random Ratio Ergodic Theorem]
\label{thm:1strret}
Let \(\left(X,\mu\right)\) be a nonsingular probability \(G\)-space and suppose that \(E_{G}^{X}\) is hypercompact with an equicompact filtration \(E_{1}\subseteq E_{2}\subseteq\dotsm\). Then for every \(f\in L^{1}\left(X,\mu\right)\),
\[\lim_{n\to\infty}\frac{\int_{G_{E_{n}}\left(x\right)}\nabla_{g}\left(x\right)f\left(g.x\right)d\lambda\left(g\right)}{\int_{G_{E_{n}}\left(x\right)}\nabla_{g}\left(x\right)d\lambda\left(g\right)}=\mathbb{E}\left(f\mid\mathrm{Inv}_{G}\left(X\right)\right)\left(x\right)\]
both \(\mu\)-a.e. and in \(L^{1}\left(X,\mu\right)\).
\end{thm}

It is worth noting that in the setting of Theorem \ref{thm:1strret}, while it is called an ergodic theorem, we do not say that the filtration has the F\o lner property.

\begin{proof}[Proof of Theorem \ref{thm:1strret}]
Since \(E_{1}\subseteq E_{2}\subseteq\dotsm\) is a filtration of \(E_{G}^{X}\) modulo \(\mu\), \(\mathrm{Inv}\left(E_{1}\right)\supset\mathrm{Inv}\left(E_{2}\right)\supset\dotsm\dotsc\) is an approximation of \(\mathrm{Inv}_{G}\left(X\right)\) modulo \(\mu\). Then by L\'{e}vy's martingale convergence theorem (see e.g. \cite[Chapter VII, \S4, Theorem 3 \& Problem 1]{shiryaev1996}), for every \(f\in L^{1}\left(X,\mu\right)\) it holds that
\[\lim_{n\to\infty}\mathbb{E}\left(f\mid\mathrm{Inv}\left(E_{n}\right)\right)=\mathbb{E}\left(f\mid\mathrm{Inv}_{G}\left(X\right)\right)\]
both \(\mu\)-a.e. and in \(L^{1}\left(X,\mu\right)\). By the formula for conditional expectation as in Proposition \ref{prop:condexp} this is the limit stated in the theorem.
\end{proof}

The following theorem was proved by Danilenko \cite[Appendix A]{danilenko2019} for countable amenable groups, using the Ornstein--Weiss Theorem. For the general case we will follow the same idea as Danilenko, only that we substitute the Ornstein--Weiss Theorem with Theorem \ref{thm:pmp} and use the formulas for conditional expectations as in Proposition \ref{prop:condexp}. It is worth mentioning that the general idea of the proof is the same, albeit considerably simpler, to the approach used by Bowen \& Nevo in establishing pointwise ergodic theorems for probability preserving actions of nonamenable (countable) groups. See e.g. \cite{bowen2013, bowen2015}.

\begin{defn}[Random Filtrations]
A {\bf random equicompact filtration} of \(G\) is a sequence \(\left(\mathcal{S}_{1},\mathcal{S}_{2},\dotsc\right)\) such that:
\begin{enumerate}
    \item Each \(\mathcal{S}_{n}\) is a function from an abstract probability space \(\left(\Omega,\mathbb{P}\right)\) into the relatively compact sets in \(G\).
    \item Each \(\mathcal{S}_{n}\) is measurable in the sense that \(\left\{ \left(\omega,g\right):g\in\mathcal{S}_{n}\left(\omega\right)\right\}\) is a measurable subset of \(\Omega\times G\).  
    \item \(\mathcal{S}_{1}\subseteq\mathcal{S}_{2}\subseteq\dotsm\) is \(\mathbb{P}\)-almost surely an equicompact filtration of \(G\).
\end{enumerate}
A {\bf random F\o lner sequence} is a random equicompact filtration \(\left(\mathcal{S}_{1},\mathcal{S}_{2},\dotsc\right)\) of \(G\) that forms \(\mathbb{P}\)-almost surely a F\o lner sequence of \(G\).
\end{defn}

Our source for Random F\o lner filtrations is, of course, hypercompact orbit equivalence relations:

\begin{lem}
\label{lem:randfil}
Suppose \(E_{G}^{X}\) is hypercompact with an equicompact filtration \(E_{1}\subseteq E_{2}\subseteq\dotsm\). Then the sequence \(\left(\mathcal{S}_{1},\mathcal{S}_{2},\dotsc\right)\) that is defined on \(\left(X,\mu\right)\) by 
\[\mathcal{S}_{n}\left(x\right)=G_{E_{n}}\left(x\right)=\left\{g\in G:\left(x,g.x\right)\in E\right\}\]
forms a random equicompact filtration.
\end{lem}

\begin{proof}
Of course, \(\left(\mathcal{S}_{1}\left(x\right),\mathcal{S}_{2}\left(x\right),\dotsc\right)\) forms an equicompact filtration of \(G\) for every \(x\in X\). In order to see the measurability, note that whenever \(E\subseteq E_{G}^{X}\) is an OER, and in particular a Borel subset of \(X\times X\), then the set \(\left\{\left(x,g\right)\in X\times G:g\in G_{E}\left(x\right)\right\}\) is nothing but the inverse image of \(E\) under the Borel map
\[X\times G\to X\times X,\quad\left(x,g\right)\mapsto \left(x,g.x\right).\qedhere\]
\end{proof}

Having all these notions defined we can prove now Theorem \ref{thm:rret}:

\begin{proof}[Proof of Theorem \ref{thm:rret}]
Pick any probability preserving free \(G\)-space \(\left(Y,\nu\right)\) (the existence of such a \(G\)-space is well-known; see e.g. \cite[Remark 8.3]{avraham2024hopf}). Since \(G\) is amenable, it follows from Theorem \ref{thm:CFW} that \(\left(Y,\nu\right)\) is hypercompact, and in fact there can be found an equicompact filtration \(F_{1}\subseteq F_{2}\subseteq\dotsm\) of \(E_{G}^{Y}\). Consider the diagonal nonsingular \(G\)-space \(\left(X\times Y,\mu\otimes\nu\right)\), namely with the action of \(G\) that is given by \(g.\left(x,y\right)=\left(g.x,g.y\right)\). Note that \(E_{G}^{X\times Y}\) is measure-hypercompact, since it has the equicompact filtration \(E_{1}\subseteq E_{2}\subseteq\dotsm\) given by
\[E_{n}=\left\{ \left(\left(x,y\right),\left(g.x,g.y\right)\right):\left(y,g.y\right)\in F_{n}\right\}.\]
Indeed, as the action of \(G\) on \(\left(Y,\nu\right)\) is free, it is easy to verify that
\[G_{n}\left(x,y\right):=G_{E_{n}}\left(x,y\right)=G_{F_{n}}\left(y\right),\quad\left(x,y\right)\in X\times Y.\]
Also, since \(G\) acts on \(\left(Y,\nu\right)\) in a probability preserving way, its Radon-Nikodym cocycle can be taken to be the constant function \(1\), so we have
\[\nabla_{g}^{\mu\otimes\nu}\left(x,y\right)=\nabla_{g}\left(x\right),\quad\left(x,y\right)\in X\times Y,\]
where \(\nabla_{g}^{\mu\otimes\nu}\) and \(\nabla_{g}\) denote the Radon--Nikodym cocycle \eqref{eq:rncoc} of the nonsingular \(G\)-spaces \(\left(X\times Y,\mu\otimes\nu\right)\) and \(\left(X,\mu\right)\), respectively. Then for every \(f\in L^{1}\left(X,\mu\right)\), applying Theorem \ref{thm:1strret} to \(f\otimes 1\in L^{1}\left(X\times Y,\mu\otimes\nu\right)\), we obtain
\[\lim_{n\to\infty}\frac{\int_{G_{n}\left(y\right)}\nabla_{g}\left(x\right)f\left(g.x\right)d\lambda\left(g\right)}{\int_{G_{n}\left(y\right)}\nabla_{g}\left(x\right)d\lambda\left(g\right)}=\mathbb{E}\big(f\otimes1\mid\mathrm{Inv}\big(E_{G}^{X\times Y}\big)\big)=\mathbb{E}\left(f\mid\mathrm{Inv}\left(E_{G}^{X}\right)\right)\]
both \(\mu\otimes\nu\)-a.e. and in \(L^{1}\left(X\times Y,\mu\otimes\nu\right)\). Define the random equicompact filtration \(\left(\mathcal{S}_{1},\mathcal{S}_{2},\dotsc\right)\) on the probability space \(\left(Y,\nu\right)\) by \(\mathcal{S}_{n}\left(y\right)=G_{E_{n}}\left(y\right)\), \(n\in\mathbb{N}\), which is a random equicompact filtration by Lemma \ref{lem:randfil}.

Finally, in order to see that \(\left(\mathcal{S}_{1},\mathcal{S}_{2},\dotsc\right)\) is a random F\o lner sequence, note that since \(F_{1}\subseteq F_{2}\subseteq\dotsm\) is an equicompact filtration of \(G\), then \(E_{1}\subseteq E_{2}\subseteq\dotsm\) is an equicompact filtration of \(E_{G}^{X\times Y}\). Then, as we have shown in the proof of Theorem \ref{thm:pmp}, the equicompactness ensures that for \(\nu\)-a.e. \(y\in Y\) this is a F\o lner sequence.
\end{proof}

\section{Hopf Dichotomy in Nonsingular Bernoulli Actions}
\label{sct:Hopf}

An important model in nonsingular ergodic theory is the {\bf nonsingular Bernoulli \(G\)-space}. For a countable group \(G\), this is the Borel \(G\)-space \(\left\{0,1\right\}^{G}\), where the action is given by translating the coordinates, and with a product measure of the form \(\bigotimes_{g\in G}\left(p_{g},1-p_{g}\right)\). A classical theorem of Kakutani provides a criterion on the summability of \(\left(p_{g}\right)_{g\in G}\) that completely determines when this action becomes nonsingular, in which case \(\big(\left\{0,1\right\}^{G},\bigotimes_{g\in G}\left(p_{g},1-p_{g}\right)\big)\) is called a {\bf nonsingular Bernoulli shift}. This model is well-studied in ergodic theory and its related fields in recent years, and many results are known about its conservativity, ergodicity and other ergodic-theoretical properties. See the survey \cite{danilenko2023ergodic}.

When it comes to an lcsc group \(G\), the natural model for nonsingular Bernoulli \(G\)-spaces is the nonsingular Poisson suspension.\footnote{Nonsingular Poisson suspension which has a dissipative base is referred to as nonsingular Bernoulli \(G\)-space, following the terminology in \cite[\S8]{danilenko2023krieger}.} We will introduced the fundamentals of this model below. The analog of the Kakutani dichotomy was established by Takahashi, and the formula for the Radon--Nikodym derivative was presented by Danilenko, Kosloff \& Roy \cite[Theorem 3.6]{danilenko2022nonsingular}. See also the survey \cite{danilenko2023ergodic}.

Here we will use the Random Ratio Ergodic Theorem \ref{thm:rret} in order to show that for a large class of nonsingular Bernoulli \(G\)-spaces the \emph{Hopf Dichotomy} holds: they are either totally dissipative or ergodic. Our method follows the main line of the \emph{Hopf method} in proving ergodicity, which was presented in the nonsingular category by Kosloff \cite{kosloff2019} and extended by Danilenko \cite{danilenko2019}.

\subsubsection{Conservativity and Dissipativity}

Let \(G\) be an lcsc group and \(\left(X,\mu\right)\) a nonsingular \emph{probability} \(G\)-space. Then \(\left(X,\mu\right)\) is said to be {\bf conservative} if it has the following recurrence property: for every Borel set \(A\subseteq X\) with \(\mu\left(A\right)>0\) and every compact set \(K\subset G\), there exists \(g\in G\backslash K\) such that \(\mu\left(A\cap g.A\right)>0\). It is a classical fact (for a proof see \cite{avraham2024hopf}) that conservativity is equivalent to that
\[\int_{G}\frac{d\mu\circ g}{d\mu}\left(x\right)d\lambda\left(g\right)=+\infty\text{ for }\mu\text{-a.e. }x\in X.\]
Accordingly, the \emph{Hopf Decomposition} or the \emph{Conservative--Dissipative Decomposition} of \(\left(X,\mu\right)\) is given by
\[\mathcal{D}:=\big\{ x\in X:\int_{G}\frac{d\mu\circ g}{d\mu}\left(x\right)d\lambda\left(g\right)<+\infty\big\} \text{ and }\mathcal{C}:=Y\backslash\mathcal{D}.\]
Thus, \(\left(X,\mu\right)\) is {\bf totally dissipative} if \(\mu\left(\mathcal{D}\right)=1\) and, on the other extreme, it is {\bf conservative} if \(\mu\left(\mathcal{C}\right)=1\).

\subsubsection{Nonsingular Bernoulli \(G\)-spaces}

Let \(G\) be an lcsc group and \(\mu\) be an absolutely continuous (i.e. in the Haar measure class) measure on \(G\). Denote the corresponding Radon--Nikodym derivative by
\[\partial:G\to\mathbb{R}_{>0},\quad \partial\left(x\right)=\frac{d\mu}{d\lambda}\left(x\right).\]
Thus, \(\left(G,\mu\right)\) is a nonsingular \(G\)-space in the action of \(G\) on itself by translations, and we have the Radon--Nikodym cocycle
\[\nabla_{g}\left(x\right):=\frac{d\mu\circ g}{d\mu}\left(x\right)=\frac{d\mu}{d\lambda}\left(gx\right)\frac{d\lambda}{d\mu}\left(x\right)=\frac{\partial\left(gx\right)}{\partial\left(x\right)},\quad g,x\in G.\]
Here \(\mu\circ g\) is the measure \(A\mapsto\mu\left(gA\right)\) (\(g\) forms an invertible transformation).

Let \(G^{\ast}\) be the space of counting Radon measures on \(G\). Thus an element \(p\in G^{\ast}\) is a Radon measure on \(G\) that takes nonnegative integer values. The Borel \(\sigma\)-algebra of \(G^{\ast}\) is generated by the mappings
\[N_{A}:G^{\ast}\to\mathbb{Z}_{\geq 0}\cup\left\{+\infty\right\},\quad N_{A}\left(p\right)=p\left(A\right),\]
for all Borel sets \(A\subseteq G\). Denote by \(\mu^{\ast}\) the unique probability measure on \(G^{\ast}\) with respect to which the distribution of each of the random variables \(N_{A}\) is Poisson with mean \(\mu\left(A\right)\), meaning that
\[\mu^{\ast}\left(N_{A}=k\right)=\frac{e^{-\mu\left(A\right)}\mu\left(A\right)^{k}}{k!},\quad k\in\mathbb{Z}_{\geq0}.\]
It is a basic fact that when \(G\) is non-discrete so that \(\mu\) is nonatomic, for every \(p\) in a \(\mu^{\ast}\)-conull set in \(G^{\ast}\) there are no repetitions, that is to say, every such \(p\) is of the form \(p=\sum_{p\in P}\delta_{p}\), where \(P\subset G\) is some discrete set that we will denote \(P=\mathrm{Supp}\left(p\right)\) and refer to as the {\bf support} of \(p\).

There is a natural Borel action of \(G\) on \(G^{\ast}\), which is given by
\[g.p\left(A\right)=p\left(gA\right),\quad g\in G.\]
By \cite[Theorem 3.6]{danilenko2022nonsingular} (see also \cite[Theorem 3.1]{danilenko2023ergodic}), when \(\mu\) satisfies
\[\nabla_{g}-1\in L^{1}\left(G,\mu\right)\text{ for all }g\in G,\]
or, equivalently,
\[g.\partial-\partial\in L^{1}\left(G,\lambda\right)\text{ for all }g\in G,\]
then the action of \(G\) on \(\left(G^{\ast},\mu^{\ast}\right)\) is nonsingular, and the Radon--Nikodym derivatives have the formula
\begin{equation}
\begin{aligned}
\label{eq:RNform}
\nabla_{g}^{\ast}\left(p\right):=\frac{d\mu^{\ast}\circ g}{d\mu^{\ast}}\left(p\right)
&=e^{-\int_{G}\left(\nabla_{g}-1\right)d\mu}\cdot\prod\nolimits_{x\in\mathrm{Supp}\left(p\right)}\nabla_{g}\left(x\right) \\
&=e^{-\int_{G}\left(g.\partial-\partial\right)d\lambda}\cdot\prod\nolimits_{x\in\mathrm{Supp}\left(p\right)}\frac{\partial\left(gx\right)}{\partial\left(x\right)},\quad g\in G.
\end{aligned}
\end{equation}

\subsubsection{A Hopf Dichotomy}

The following theorem is a general result about nonsingular Bernoulli \(G\)-spaces. For constructions of nonsingular Bernoulli \(G\)-spaces with a variety of ergodic properties we refer to \cite{danilenko2022generic, danilenko2023krieger, danilenko2022krieger}.

\begin{thm}
\label{thm:conserg}
Let \(G\) be an lcsc \emph{amenable} group and \(\mu\) an absolutely continuous measure on \(G\) such that \(\partial:=d\mu/d\lambda\) satisfies
\[0<\inf\partial<\sup\partial<+\infty\text{ and }g.\partial-\partial\in L^{1}\left(G,\lambda\right),\quad g\in G.\]
Then the nonsingular \(G\)-space \(\left(G^{\ast},\mu^{\ast}\right)\) is either totally dissipative or ergodic.
\end{thm}

The first part of the proof, namely that \(\left(G^{\ast},\mu^{\ast}\right)\) is either totally dissipative or conservative, is essentially the same as the proof of \cite[Theorem 8.2]{danilenko2022generic}. The main part of the proof, namely that conservativity implies ergodicity, uses Corollary \ref{cor:danilrret} of the Random Ratio Ergodic Theorem. Both parts relies on the following observation of Danilenko, Kosloff \& Roy.

Denote by \(\Pi=\Pi\left(G,\mu\right)\) the group of all \(\mu\)-preserving invertible compactly supported transformations of \(\left(G,\mu\right)\). We denote by \(\mathrm{Supp}\left(\pi\right)\) the (compact) support of \(\pi\in\Pi\). Then \(\Pi\) acts naturally on \(\left(G^{\ast},\mu^{\ast}\right)\) in a measure preserving way via
\[\pi.p\left(A\right)=p\left(\pi^{-1}\left(A\right)\right),\quad \pi\in\Pi,\,p\in G^{\ast}.\]
As it was proved in the course of the proof \cite[Theorem 8.2]{danilenko2022generic}, we have:

\begin{fct}
The action of \(\Pi=\Pi\left(G,\mu\right)\) on \(\left(G^{\ast},\mu^{\ast}\right)\) is ergodic.
\end{fct}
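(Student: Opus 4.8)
The plan is to reduce the ergodicity of the $\Pi$-action on the Poisson suspension $\left(G^{\ast},\mu^{\ast}\right)$ to an elementary statement about the base action of $\Pi$ on $\left(G,\mu\right)$, and then to exploit that $\Pi$ contains \emph{all} compactly supported $\mu$-preserving transformations. Throughout I would use that, under the standing hypotheses, $G$ is non-compact and $\inf\partial>0$, so that $\mu\left(G\right)=\int_{G}\partial\,d\lambda=+\infty$. This infiniteness is essential: were $\mu\left(G\right)<+\infty$, then $N_{G}$ would be a non-constant $\Pi$-invariant function and the action could not be ergodic, so I would record this reduction to the infinite-measure case at the outset.

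First I would recall the ergodicity criterion for Poisson suspensions. The space $L^{2}\left(G^{\ast},\mu^{\ast}\right)$ carries the Fock-space (chaos) decomposition $L^{2}\left(G^{\ast},\mu^{\ast}\right)=\bigoplus_{n\geq0}H_{n}$, where $H_{0}$ consists of the constants and, for $n\geq1$, $H_{n}$ is canonically isomorphic to the symmetric tensor power $L^{2}\left(G,\mu\right)^{\odot n}$. Since each $\pi\in\Pi$ preserves $\mu$, its induced transformation of $\left(G^{\ast},\mu^{\ast}\right)$ preserves $\mu^{\ast}$ and acts on $H_{n}$ as the $n$-fold symmetric tensor power of the Koopman operator $U_{\pi}f=f\circ\pi^{-1}$ on $L^{2}\left(G,\mu\right)$. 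Hence a $\Pi$-invariant $F\in L^{2}\left(G^{\ast},\mu^{\ast}\right)$ decomposes into $\Pi$-invariant components in each $H_{n}$, and ergodicity amounts to the absence of a nonzero invariant vector in each symmetric tensor representation for $n\geq1$. The standard reduction, which I would cite from \cite{danilenko2022genericPoisson}, is that this holds precisely when the base action of $\Pi$ on $\left(G,\mu\right)$ admits no invariant set of positive finite measure.

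The heart of the argument is thus to show that there is no Borel set $B\subset G$ with $0<\mu\left(B\right)<+\infty$ and $\pi\left(B\right)=B$ modulo $\mu$ for every $\pi\in\Pi$. Suppose such a $B$ existed. Since $\mu\left(G\right)=+\infty>\mu\left(B\right)$, both $B$ and its complement have positive measure, and by inner regularity of $\mu$ there are compact sets $K_{1}\subset B$ and $K_{2}\subset G\setminus B$ with $\mu\left(K_{1}\right),\mu\left(K_{2}\right)>0$. As $\mu$ is nonatomic, being absolutely continuous on a non-discrete group, one can construct a $\mu$-preserving involution $\pi$ supported on $K_{1}\cup K_{2}$ that carries a positive-measure subset of $K_{1}$ onto a subset of $K_{2}$; such $\pi$ lies in $\Pi$ and satisfies $\mu\left(\pi\left(B\right)\setminus B\right)>0$, contradicting the invariance of $B$. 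Therefore no invariant set of positive finite measure exists, and the criterion yields ergodicity of the $\Pi$-action on $\left(G^{\ast},\mu^{\ast}\right)$.

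The main obstacle I anticipate is not the fullness argument, which is routine once $\mu$ is nonatomic and the compact witnesses $K_{1},K_{2}$ are fixed, but rather the precise passage from \emph{no invariant set of positive finite measure} to the vanishing of invariant vectors in \emph{every} chaos $H_{n}$. Rather than reprove this equivalence I would invoke the known Poisson-suspension criterion and merely verify its hypothesis, as established above. A secondary point worth stating explicitly is the dichotomy in $\mu\left(G\right)$: the argument genuinely needs $\mu\left(G\right)=+\infty$, and the complementary finite-measure case, excluded here by the standing hypotheses, is exactly where ergodicity fails through the invariant observable $N_{G}$.
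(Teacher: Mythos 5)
Your argument is correct in substance, but note how it sits relative to the paper: the paper does not prove this Fact internally at all---it quotes it from the course of the proof of \cite[Theorem 8.2]{danilenko2022genericPoisson}---so the only comparison is with that source, and your chaos-decomposition route is essentially the standard argument used there. Two remarks. First, the criterion you invoke (the Poisson suspension of a measure-preserving action is ergodic iff the base action admits no invariant set of positive finite measure) is indeed true, and in the measure-preserving setting the step you single out as the ``main obstacle'' has a two-line proof you could simply include, making the whole thing self-contained: if $0\neq F\in H_{n}\cong L^{2}\left(G,\mu\right)^{\odot n}\subset L^{2}\left(G^{n},\mu^{\otimes n}\right)$ is $\Pi$-invariant, set $h\left(x\right):=\int_{G^{n-1}}\left|F\left(x,y_{2},\dotsc,y_{n}\right)\right|^{2}d\mu^{\otimes\left(n-1\right)}\left(y_{2},\dotsc,y_{n}\right)$; since each $\pi\in\Pi$ preserves $\mu$ and $F$ is invariant under the diagonal action, a change of variables gives $h\circ\pi=h$, while $h\in L^{1}\left(G,\mu\right)$ is nonzero, so some level set $\left\{h>c\right\}$ is a $\Pi$-invariant set of positive \emph{finite} measure (finite by the Markov inequality), contradicting your fullness lemma; hence every chaos $H_{n}$, $n\geq1$, has no invariant vectors. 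Second, your caveats are genuinely needed and worth keeping: for compact $G$ one has $\mu\left(G\right)<+\infty$ and $N_{G}$ is a nonconstant $\Pi$-invariant observable, so the Fact as literally stated fails (harmlessly for the paper, since for compact $G$ the $G$-action is trivially totally dissipative and the dichotomy of Theorem \ref{Theorem: conservative-ergodic} holds vacuously); likewise nonatomicity of $\mu$ requires $G$ non-discrete---for discrete $G$ with, say, injective density $\partial$ the group $\Pi$ degenerates to the identity---which matches the paper's standing assumption, made just before the Fact, that $G$ is non-discrete so that configurations are a.s.\ simple. With those provisos, your swap construction of a compactly supported $\mu$-preserving involution exchanging equal-mass subsets of $K_{1}\subset B$ and $K_{2}\subset G\setminus B$ is routine and correct, and the proof goes through.
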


\begin{proof}[Proof of Theorem \ref{thm:conserg}]
First, note that for every \(g\in G\) we have
\[\int_{G}\left|g.\partial/\partial-1\right|d\mu=\int_{G}\left|g.\partial-\partial\right|d\lambda.\]
Then since \(g.\partial-\partial\in L^{1}\left(G,\lambda\right)\) for every \(g\in G\), it follows from \cite[Theorem 3.6]{danilenko2022nonsingular}) that \(\left(G^{\ast},\mu^{\ast}\right)\) is a nonsingular \(G\)-space.

\subsubsection*{Part 1}

Look at the dissipative part of \(\left(G^{\ast},\mu^{\ast}\right)\), that we denote
\[\mathcal{D}^{\ast}:=\big\{ p\in G^{\ast}:\int_{G}\nabla_{g}^{\ast}\left(p\right)d\lambda\left(g\right)<+\infty\big\}.\]
Let us show that \(\mathcal{D}^{\ast}\) is \(\Pi\)-invariant. Find \(0<\alpha<1\) with \(\alpha\leq\inf\partial<\sup\partial\leq\alpha^{-1}\). For every \(\pi\in\Pi\) consider the function
\[\upsilon_{\pi}:G^{\ast}\to\mathbb{R}_{>0},\quad\upsilon_{\pi}\left(p\right)=\alpha^{\#\left(\mathrm{Supp}\left(p\right)\cap \mathrm{Supp}\left(\pi\right)\right)}=\alpha^{N_{\mathrm{Supp}\left(\pi\right)}\left(p\right)}.\]
Since \(\mathrm{Supp}\left(\pi\right)\) is compact, for \(\mu^{\ast}\)-a.e. \(p\in G^{\ast}\) we have \(N_{\mathrm{Supp}\left(\pi\right)}\left(p\right)<+\infty\). By the formula \eqref{eq:RNform} we deduce
\begin{equation}
\small
\label{eq:epsilon}
\frac{\nabla_{g}^{\ast}\left(\pi.p\right)}{\nabla_{g}^{\ast}\left(p\right)} =
\prod\nolimits_{x\in\mathrm{Supp}\left(p\right)\cap \mathrm{Supp}\left(\pi\right)}\frac{\nabla_{g}\left(\pi\left(x\right)\right)}{\nabla_{g}\left(x\right)}
\in \big[\upsilon_{\pi}\left(p\right)^{4}, \upsilon_{\pi}\left(p\right)^{-4}\big].
\end{equation}
This readily implies the \(\Pi\)-invariance of \(\mathcal{D}^{\ast}\), and since \(\Pi\) acts ergodically we deduce that \(\left(G^{\ast},\mu^{\ast}\right)\) is either totally dissipative or conservative.

\subsubsection*{Part 2}

We use Corollary \ref{cor:danilrret} of the Random Ratio Ergodic Theorem in order to establish the following property: Assuming that \(\left(G^{\ast},\mu^{\ast}\right)\) is conservative, then for every \(\pi\in\Pi\) there exists a function \(\Upsilon_{\pi}:G^{\ast}\to\mathbb{R}_{>0}\) such that for every nonnegative function \(\varphi\in L^{1}\left(G^{\ast},\mu^{\ast}\right)\) and \(\mu^{\ast}\)-a.e. \(p\in G^{\ast}\),
\begin{equation}
\label{eq:asymp}\tag{\({\scriptstyle \clubsuit}\)}
\small
\Upsilon_{\pi}\left(p\right)\cdot\mathbb{E}\left(\varphi\mid\mathrm{Inv}_{G}\left(G^{\ast}\right)\right)\left(p\right)\leq\mathbb{E}\left(\varphi\mid\mathrm{Inv}_{G}\left(G^{\ast}\right)\right)\left(\pi.p\right)\leq\frac{\mathbb{E}\left(\varphi\mid\mathrm{Inv}_{G}\left(G^{\ast}\right)\right)\left(p\right)}{\Upsilon_{\pi}\left(p\right)}
\end{equation}
In fact, we will show that \(\Upsilon_{\pi}\left(p\right):=\upsilon_{\pi}\left(p\right)^{16}=\alpha^{16\cdot N_{\mathrm{Supp}\left(\pi\right)}\left(p\right)}\) suffices.

Let \(\varphi\in L^{1}\left(G^{\ast},\mu^{\ast}\right)\) be some nonnegative function. By a standard approximation argument, in order to establish \eqref{eq:asymp} it is sufficient to assume that \(\varphi\) is uniformly continuous in the vague topology on \(G^{\ast}\), which induces its Borel structure. For such \(\varphi\), for every \(\pi\in\Pi\) one can easily verify that for all \(f\in C_{0}\left(G\right)\) (i.e. continuous and vanishes as \(g\to\infty\)),
\[\int_{G}f\left(g\pi\left(x\right)\right)dp\left(x\right)-\int_{G}f\left(gx\right)dp\left(x\right)\to0\text{ as }g\to\infty.\]
That is to say, \(d\left(g.\pi.p,g.p\right)\to0\) as \(g\to\infty\), where \(d\) denotes the metric of the vague topology on \(G^{\ast}\). Then by the uniform continuity of \(\varphi\) we obtain
\begin{equation}
\label{eq:asymp1}
\varphi\left(g.\pi.p\right)-\varphi\left(g.p\right)\to0\text{ as }g\to\infty\text{ uniformly in }p.
\end{equation}
Using Corollary \ref{cor:danilrret}, there exists a F\o lner sequence \(S_{1}\subseteq S_{2}\subseteq\dotsm\) of \(G\), corresponding to \(\mathcal{L}=\left\{\varphi\right\}\), whose ergodic averages
\[A_{n}\varphi\left(p\right):=\frac{\int_{S_{n}}\nabla_{g}^{\ast}\left(p\right)\varphi\left(g.p\right)d\lambda\left(g\right)}{\int_{S_{n}}\nabla_{g}^{\ast}\left(p\right)d\lambda\left(g\right)},\quad p\in G^{\ast},\,n\in\mathbb{N},\]
satisfy that
\[\lim_{n\to\infty}A_{n}\varphi\left(p\right)=\mathbb{E}\left(\varphi\mid\mathrm{Inv}_{G}\left(G^{\ast}\right)\right)\left(p\right)\text{ for }\mu^{\ast}\text{-a.e. }p\in G^{\ast}.\]
From \eqref{eq:epsilon}, for \(\mu^{\ast}\)-a.e. \(p\in G^{\ast}\) and every \(n\in\mathbb{N}\) we have
\begin{equation}
\label{eq:EA1}
\scalebox{0.9}{\(\upsilon_{\pi}\left(p\right)^{16}\cdot\frac{\int_{S_{n}}\nabla_{g}^{\ast}\left(p\right)\varphi\left(g.\pi.p\right)d\lambda\left(g\right)}{\int_{S_{n}}\nabla_{g}^{\ast}\left(p\right)d\lambda\left(g\right)}\leq A_{n}\varphi\left(\pi.p\right)\leq\upsilon_{\pi}\left(p\right)^{-16}\cdot\frac{\int_{S_{n}}\nabla_{g}^{\ast}\left(p\right)\varphi\left(g.\pi.p\right)d\lambda\left(g\right)}{\int_{S_{n}}\nabla_{g}^{\ast}\left(p\right)d\lambda\left(g\right)}\).}
\end{equation}
Recall that by the conservativity of the action of \(G\) on \(\left(G^{\ast},\mu^{\ast}\right)\) we have that
\[\lim_{n\to+\infty}\int_{S_{n}}\nabla_{g}^{\ast}\left(p\right)d\lambda\left(g\right)=\int_{G}\nabla_{g}^{\ast}\left(p\right)d\lambda\left(g\right)=+\infty,\]
so using \eqref{eq:asymp1} we deduce that
\begin{equation}
\label{eq:EA2}
\small
\begin{aligned}
& \lim_{n\to\infty}\left|\frac{\int_{S_{n}}\nabla_{g}^{\ast}\left(p\right)\varphi\left(g.\pi.p\right)d\lambda\left(g\right)}{\int_{S_{n}}\nabla_{g}^{\ast}\left(p\right)d\lambda\left(g\right)}-A_{n}\varphi\left(p\right)\right|\\
& \qquad\qquad\leq \lim_{n\to\infty}\frac{\int_{S_{n}}\nabla_{g}^{\ast}\left(p\right)\left|\varphi\left(g.\pi.p\right)-\varphi\left(g.p\right)\right|d\lambda\left(g\right)}{\int_{S_{n}}\nabla_{g}^{\ast}\left(p\right)d\lambda\left(g\right)}=0.
\end{aligned}
\end{equation}
Finally, when taking the limit as \(n\to+\infty\) in \eqref{eq:EA1}, together with \eqref{eq:EA2} we obtain \eqref{eq:asymp} for the function \(\Upsilon_{\pi}\left(p\right)=\upsilon_{\pi}\left(p\right)^{16}\).

\subsubsection*{Part 3}

We finally deduce that if \(\left(G^{\ast},\mu^{\ast}\right)\) is conservative then it is ergodic. If \(E\subseteq G^{\ast}\) is a \(G\)-invariant set with \(\mu^{\ast}\left(E\right)>0\), from \eqref{eq:asymp} for the indicator \(\varphi=1_{E}\) we obtain that \(1_{E}\left(\pi.p\right)=1_{E}\left(p\right)\) for every \(\pi\in\Pi\) and \(\mu^{\ast}\)-a.e. \(p\in G^{\ast}\), thus \(E\) is a \(\Pi\)-invariant. Since \(\Pi\) acts ergodically we deduce \(\mu^{\ast}\left(E\right)=1\).
\end{proof}

\subsection*{Acknowledgments}

We would like to express our gratitude to Michael Bj\"{o}rklund for many fruitful discussions regarding this work, and to Zemer Kosloff for suggesting that an uncountable version of Danilenko's Random Ratio Ergodic Theorem should be true, and for providing us with useful comments and corrections. We are also grateful to Alexander Kechris who, after this work was first published, communicated to us Proposition \ref{prop:kechris}.

\bibliographystyle{acm}
\bibliography{References}

\nocite{*}

\end{document}